\documentclass[11pt, a4paper]{article}

\usepackage{pb-diagram, lamsarrow,pb-lams}
\usepackage{amssymb}
\usepackage{amsthm}
\usepackage{amsmath}
\usepackage[T1]{fontenc}
\usepackage{latexsym}
\usepackage{textcomp}
\usepackage[ansinew]{inputenc}
\usepackage{exscale}
\usepackage{graphicx}
\usepackage{enumerate}


    \newcommand{\Tr}{\mathrm{Tr\,}}

    \newcommand{\aug}{\mathrm{aug\,}}

    \newcommand{\irr}{\mathrm{Irr\,}}

    \newcommand{\Ann}{\mathrm{Ann}}
    \newcommand{\Fitt}{\mathrm{Fitt}}

    \newcommand{\nr}{\mathrm{nr}}
    \newcommand{\Gl}{\mathrm{Gl}}
    \newcommand{\rad}{\mathrm{rad}}

    \newcommand{\norm}{\mathrm{norm}}
    \newcommand{\gldim}{\mathrm{gl.dim~}}






    \newcommand{\example}{\sn{\sc Example.} \ }



    
    \newcommand{\qpc} {\Q_p^{\mr{c}}}

    \newcommand{\arat} {\ar@}
    \newcommand{\inj} {\arat{^{(}->}}
    \newcommand{\sur} {\arat{->>}}
    \newcommand{\equal} {\arat{=}}
    \def\barat[#1] {\arat{->}'[#1][#1#1]}
    \def\binj[#1] {\inj'[#1][#1#1]}
    \def\bsur[#1] {\sur'[#1][#1#1]}
    \def\bequal[#1] {\equal'[#1][#1#1]}



    \newcommand{\beq}{\begin{equation}}
    \newcommand{\eeq}{\end{equation}}

    \newcommand{\mc}{\mathcal}




     \newcommand{\fa}{\goth{a}}


     \newcommand{\fo}{\goth{o}}
     \newcommand{\fp}{\goth{p}}

     \newcommand{\fD}{\goth{D}}

     \newcommand{\fM}{\goth{M}}

     \newcommand{\fP}{\goth{P}}
     \newcommand{\fQ}{\goth{Q}}


     \newcommand{\F}{\mathbb{F}}
     

     
     \newcommand{\N}{\mathbb{N}}
     \newcommand{\Q}{\mathbb{Q}}
     

     \newcommand{\Z}{\mathbb{Z}}



    \newcommand{\ti}[1]{\tilde{#1}}

    \newcommand{\me}{^{-1}}
    \newcommand{\mal}{^{\times}}

    \newcommand{\mr}{\mathrm}


    \newcommand{\zp}{{\mathbb{Z}_p}}
    
    \newcommand{\qp}{{\mathbb{Q}_p}}


    \newcommand{\into}{\rightarrowtail}
    \newcommand{\onto}{\twoheadrightarrow}
    \newcommand{\lto}{\longrightarrow}




    \newcommand{\ga}{\gamma}
    \newcommand{\Ga}{\Gamma}
    \newcommand{\si}{\sigma}
    \newcommand{\Si}{\Sigma}
    \newcommand{\de}{\delta}
    \newcommand{\De}{\Delta}
    
    \newcommand{\la}{\lambda}
    \newcommand{\La}{\Lambda}

    \newcommand{\al}{\alpha}
    \newcommand{\ve}{\varepsilon}


    \newcommand{\sn}{\par\smallskip\noindent}


    \newcommand{\bmp}{\begin{minipage}}
    \newcommand{\emp}{\end{minipage}}
    \newcommand{\btb}{\begin{tabular}}
    \newcommand{\etb}{\end{tabular}}
    \newcommand{\barr}{\begin{array}}
    \newcommand{\earr}{\end{array}}
    \newcommand{\bit}{\begin{itemize}}
    \newcommand{\eit}{\end{itemize}}
    \newcommand{\ben}{\begin{enumerate}}
    \newcommand{\een}{\end{enumerate}}
    \newcommand{\bct}{\begin{center}}
    \newcommand{\ect}{\end{center}}
    \newcommand{\bfr}{\begin{flushright}}
    \newcommand{\efr}{\end{flushright}}
    \newcommand{\bea}{\begin{eqnarray*}}
    \newcommand{\eea}{\end{eqnarray*}}
    \newcommand{\bqo}{\begin{quote}}
    \newcommand{\eqo}{\end{quote}}
    \newcommand{\bdc}{\begin{description}}
    \newcommand{\edc}{\end{description}}


    \newcommand{\Hom}{\mathrm{Hom}}
    
    \newcommand{\Ext}{\mathrm{Ext}}
    
    \newcommand{\Gal}{\mathrm{Gal}}
    \newcommand{\tr}{\mathrm{tr\,}}
    \newcommand{\ord}{\mathrm{ord\,}}

    \newcommand{\ind}{\mathrm{ind\,}}

    \newcommand{\res}{\mathrm{res\,}}

\input{amssym.def}
\input{amssym}
\input xy
\xyoption{all} \CompileMatrices

\newtheorem{theo}{Theorem}[section]
\newtheorem{prop}[theo]{Proposition}
\newtheorem{lem}[theo]{Lemma}
\newtheorem{cor}[theo]{Corollary}
\newtheorem{defi}[theo]{Definition}

\newtheorem{rem}[theo]{Remark}

\pagestyle{myheadings} \markboth{Andreas Nickel}{A conductor formula for completed group algebras}
\setlength{\topmargin}{0cm}
\setlength{\textheight}{22cm}
\setlength{\textwidth}{15cm}
\hoffset = -1cm

\newcommand{\myfootnote}[1]{%
\renewcommand{\thefootnote}{}%
\footnotetext{#1}%
\renewcommand{\thefootnote}{\arabic{footnote}}%
}

\begin{document}
\xymatrixrowsep{3pc} \xymatrixcolsep{3pc}

\title{A conductor formula for completed group algebras}
\author{Andreas Nickel\thanks{The author acknowledges financial support provided by the DFG}}
\date{}

\maketitle

\begin{abstract}
    Let $\fo$ be the ring of integers in a finite extension of $\qp$. If $G$ is a finite group and $\Ga$ is a maximal $\fo$-order containing the group ring $\fo[G]$, Jacobinski's conductor formula
    gives a complete description of the central conductor of $\Ga$ into $\fo[G]$ in terms of characters of $G$. We prove a similar result for completed group algebras $\fo [[G]]$, where $G$
    is a $p$-adic Lie group of dimension $1$. We will also discuss several consequences of this result.
\end{abstract}

\section*{Introduction}
\myfootnote{{\it 2010 Mathematics Subject Classification:} 16H10, 16H20, 11R23}
\myfootnote{{\it Keywords:} central conductor, completed group algebras, extensions of lattices, Fitting invariants}
Let $\fo$ be the ring of integers in a number field $K$ (or in a finite extension $K$ of $\qp$)
and consider the group ring $\fo[G]$ of a finite group $G$ over $\fo$. The central conductor $\mc F(\fo[G])$ consists of all elements
$x$ in the center of $\fo[G]$ such that $x \Ga \subseteq\fo[G]$, where $\Ga \subseteq K[G]$ is a chosen maximal $\fo$-order containing $\fo[G]$, i.e.
$$\mc F(\fo[G]) = \left\{x \in \zeta(\fo[G])\mid x \Ga \subseteq\fo[G] \right\}.$$
Here, we write $\zeta(\La)$ for the center of a ring $\La$. A result of Jacobinski \cite{Jacobinski} (see also \cite[Theorem 27.13]{CR-I}) gives a complete description
of the central conductor in terms of the irreducible characters of $G$. More precisely, we have
\beq \label{eqn:central-conductor-J}
    \mc F (\fo[G]) = \bigoplus_{\chi} \frac{|G|}{\chi(1)} \fD\me(\fo[\chi] / \fo),
\eeq
where $\fD\me(\fo[\chi] / \fo)$ denotes the inverse different of $\fo[\chi]$, the ring of integers in
$K(\chi) := K(\chi(g)\mid g \in G)$, with respect to $\fo$, and the sum runs through all
absolutely irreducible characters of $G$ modulo the following Galois action:
If $\chi$ is an absolutely irreducible character of $G$ and $\si$ belongs to $\Gal(K(\chi)/K)$,
then $\si$ acts on $\chi$ as $^{\si}\chi(g) = \si(\chi(g))$
for every $g \in G$.
Jacobinski's main interest was in determining annihilators of $\Ext$; in fact, he showed that
$$\mc F(\fo[G]) \cdot \Ext^1_{\fo[G]} (M,N) = 0$$
for all $\fo[G]$-lattices $M$ and $\fo[G]$-modules $N$. For instance, it can be deduced from this result
that $|G| / \chi(1)$ annihilates $\Ext^1_{\fo[G]} (M_{\chi},N)$
if $M_{\chi}$ is an $\fo[G]$-lattice such that $K \otimes_{\fo} M_{\chi}$ is absolutely simple with character $\chi$. Later, Roggenkamp \cite{Roggenkamp}
showed that the annihilators obtained in this way are in fact the best possible in a certain precise sense.\\

In this article we consider completed group algebras $\fo[[G]]$, where $\fo$ denotes the ring of integers in a finite extension $K$ of $\qp$ and $G$ is a $p$-adic Lie group of
dimension $1$. Hence $G$ may be written as a semi-direct product $H \rtimes \Ga$ with finite $H$ and a cyclic pro-$p$-group $\Ga$, isomorphic to $\zp$. We will exclude the
special case $p=2$, as we will make heavily use of results of Ritter and Weiss \cite{towardII} (where the underlying prime is assumed to be odd)
on the total ring of fractions $\mc Q^K(G)$ of $\fo[[G]]$. However, it turns out that the results provided by Ritter and Weiss are not sufficient for our purposes
and so we shall have to determine the structure of $\mc Q^K(G)$ in more detail,
thereby generalizing and extending results of Lau \cite{Irene_Lau} (where $K = \qp$ and $G$ is pro-$p$).
We will do this in the first section.
In section $2$ we provide the necessary preparations for our main
theorem including results on reduced traces and conductors.
The main theorem will then be stated and proved in section $3$.
More precisely, if we define the central conductor in complete analogy to the group ring case, then we
have an inclusion
\begin{equation} \label{eqn:intro-cond}
    \bigoplus_{\chi / \sim} \frac{|H| w_{\chi}}{\chi(1)} \cdot \fD\me(\fo_{\chi} / \fo) \fo_{\chi}[[\Ga_{\chi}']] \subseteq \mc F(\fo[[G]]),
\end{equation}
where the sum runs through all absolutely irreducible characters of $G$ with open kernel up to a certain explicit equivalence relation.
Moreover, $w_{\chi}$ is the index of a certain subgroup (depending on $\chi$) in $G$ and $\fo_{\chi}$ denotes the ring of integers in
$K_{\chi} := K(\chi(h) \mid h \in H)$. Finally, $\Ga_{\chi}'$ is a cyclic pro-$p$-group which has
an explicitly determined topological generator. We will give precise definitions later in the text.
The inclusion (\ref{eqn:intro-cond}) is not far from being an equality. In fact, we show that its `$\chi$-part' becomes an equality
after localization at any height $1$ prime ideal of $\fo_{\chi}[[\Ga_{\chi}']]$ that does not contain $p$.
It is an equality if $G = H \times \Ga$ is a direct product or if no skewfields occur in the Wedderburn decomposition
of $\mc Q^K(G)$ (in fact it suffices to suppose that the Schur indices are not divisible by $p$). Moreover, we will also explicitly determine
the central conductor whenever $G$ is a pro-$p$-group; we use this to give an example where (\ref{eqn:intro-cond}) is a proper inclusion.\\

The proof of Jakobinski's central conductor formula does not carry over unchanged to the present situation for two reasons. First, the completed group algebra
is an order over the power series ring $\fo[[T]]$, but there is no canonical choice of embedding of $\fo[[T]]$ into $\zeta(\fo[[G]])$. Secondly and more seriously,
the ring $\fo[[T]]$ is a regular local ring, but it is not a Dedekind domain. Furthermore, even if we localize at a height one prime ideal,
the residue field will not be finite. Hence we do not have the well elaborated theory of maximal orders over discrete valuation rings with finite residue field
at our disposal. In the aforementioned two cases, when
$G = H \times \Ga$ is a direct product or when no skewfields occur in the Wedderburn decomposition,
we will overcome this problem by replacing our chosen maximal $\fo[[T]]$-order by a suitable maximal $\fo$-order.
When $G$ is a pro-$p$-group, it is the explicit description of the occurring skewfields due to Lau \cite{Irene_Lau} which allows us to
determine the central conductor.\\

Finally, we derive some consequences in section $4$. In particular, we obtain results for the corresponding $\Ext$-groups in analogy to the group ring case.
We also apply our main result to the theory of non-commutative Fitting invariants introduced by the author \cite{ich-Fitting}
and further developed in \cite{JN_Fitting}.
This theory may be applied
to $\fo[[G]]$-modules even if $G$ is non-abelian, but in contrast to the commutative case, the Fitting invariant of a finitely presented $\fo[[G]]$-module $M$
might not be contained in the annihilator of $M$. To obtain annihilators one has to multiply by a certain ideal $\mc H(\fo[[G]])$
of $\zeta(\fo[[G]])$ which is hard to determine in general.
However, it is easily seen that $\mc H(\fo[[G]])$ always contains the central conductor
so that our main theorem provides a method to compute explicit annihilators
of a finitely presented $\fo[[G]]$-module, at least, if we are able to compute its Fitting invariant.\\

{\sc Acknowledgement. \,}
The author is indebted to Henri Johnston for his many suggestions and helpful remarks.

\section{The total ring of fractions of a completed group algebra}

Let $p$ be an odd prime and let $G$ be a profinite group containing a finite normal subgroup $H$ such that
$G/H \simeq \Ga$ for a pro-p-group $\Ga$, isomorphic to $\zp$; thus $G$ can be written
as a semi-direct product $H \rtimes \Ga$ and is a $p$-adic Lie group of dimension $1$.
We denote the completed group algebra $\zp[[G]]$ by $\La(G)$ that is
\[
\Lambda(G) := \Z_{p}[[G]] = \varprojlim \Z_{p}[G/N],
\]
where the inverse limit is taken over all open normal subgroups $N$ of $G$.
If $K$ is a finite field extension of $\qp$
with ring of integers $\fo$, we put $\La^{\fo}(G) := \fo \otimes_{\zp} \La(G) = \fo[[G]]$.
We fix a topological
generator $\ga$ of $\Ga$ and choose a natural number $n$ such that $\ga^{p^n}$ is central in $G$.
Since we also have that $\Ga^{p^n} \simeq \zp$, there is an isomorphism $\fo [[\Ga^{p^n}]] \simeq \fo [[T]]$
induced by $\ga^{p^n} \mapsto 1+T$. Here, $R := \fo [[T]]$ denotes the power series ring in one variable over $\fo$.
If we view $\La^{\fo}(G)$ as an $R$-module (or more generally as a left $R[H]$-module), there is a decomposition
$$\La^{\fo}(G) = \bigoplus_{i=0}^{p^n-1} R[H] \ga^i.$$
Hence $\La^{\fo}(G)$ is finitely generated and free as an $R$-module and is an $R$-order in the separable $L:=Quot(R)$-algebra
$\mathcal Q^K (G) := L \otimes_R \La^{\fo}(G)$. Note that $\mathcal Q^K (G)$ is obtained from $\La^{\fo}(G)$ by inverting all regular elements.
By \cite[Lemma 1]{towardII} we have $\mathcal Q^K (G) = K \otimes_{\qp} \mc Q(G)$, where $\mc Q(G) := \mc Q^{\qp}(G)$
denotes the total ring of fractions of $\La(G)$.

Let $\qpc$ be an algebraic closure of $\qp$ and denote by $\irr(G)$ the set of absolutely irreducible $\qpc$-valued characters of $G$
with open kernel. Fix $\chi \in \irr(G)$.
Let $\eta$ be an irreducible constituent of $\res^{G}_H \chi$.
Then $G$ acts on $\eta$ as $\eta^g(h) = \eta(g\me hg)$ for $g \in G$, $h \in H$, and we set
$$St(\eta) := \{g \in G: \eta^g = \eta \}, \quad e(\eta) := \frac{\eta(1)}{|H|} \sum_{h \in H} \eta(h\me) h,
\quad e_{\chi} := \sum_{\eta \mid \res^G_H \chi} e(\eta).$$
Choose a finite Galois extension $E$ of $K$ such that the character $\chi$ has a realization
$V_{\chi}$ over $E$.
By \cite[Corollary to Proposition 6]{towardII} $e_{\chi}$ is a primitive central idempotent of $\mc Q^E (G)$.
In fact, it is shown that every primitive central idempotent of $\mc Q^c(G) := \qpc \otimes_{\qp} \mc Q(G)$ is an $e_{\chi}$,
and $e_{\chi} = e_{\chi'}$ if and only if $\chi = \chi' \otimes \rho$ for some character $\rho$ of $G$ of type $W$
(i.e.~$\res^G_H \rho = 1$).

By Clifford theory \cite[Proposition 11.4]{CR-I}
the irreducible constituents of $\res^{G}_H \chi$ are precisely the conjugates of $\eta$
under the action of $G$, each occurring with the same multiplicity $z_{\chi}$. By \cite[Lemma 4]{towardII}
we have $z_{\chi} = 1$ and thus equalities
\beq \label{eqn:e_chi}
    \res^{G}_H \chi = \sum_{i=0}^{w_{\chi}-1} \eta^{\ga^i},
    \quad e_{\chi} = \sum_{i=0}^{w_{\chi}-1} e(\eta^{\ga^i}) = \frac{\chi(1)}{|H| w_{\chi}} \sum_{h \in H} \chi(h\me) h,
\eeq
where $w_{\chi} := [G : St(\eta)]$. Note that $w_{\chi}$ is a power of $p$ as $H$ is a subgroup of $St(\eta)$.
We now put
$$K_{\chi} := K(\chi(h) \mid h \in H) \subseteq K(\eta) := K(\eta(h) \mid h \in H)$$
and note that $K_{\chi} = K_{\chi \otimes \rho}$ whenever $\rho$ is of type $W$.
As $g\me H g = H$, we have $K(\eta^g) = K(\eta)$ for every $g \in G$ and thus $K(\eta)$ does not depend on the
particular choice of irreducible constituent of $\res^G_H \chi$.

We let $\si \in \Gal(\qpc / K)$ act on $\chi$ as $^{\si}\chi(g) = \si(\chi(g))$ for all $g \in G$
and similarly on characters of $H$. Note that the actions on $\res^G_H \chi$ and $\eta$ factor through
$\Gal(K_{\chi}/K)$ and $\Gal(K(\eta)/K)$, respectively.

\begin{lem} \label{lem:field-index}
    The degree $[K(\eta):K_{\chi}]$ divides $w_{\chi}$; in particular, it is a power of $p$.
\end{lem}

\begin{proof}
    Fix $\si$ in $\Gal(K(\eta)/K_{\chi})$. As $\si$ acts trivially on $\res^G_H \chi$, the character $^{\si}\eta$
    also is an irreducible constituent of $\res^G_H \chi$. Hence there are irreducible constituents $\eta_1, \dots, \eta_s$
    of $\res^G_H \chi$ such that $\res^G_H \chi$ may be written as
    $$\res^G_H \chi = \sum_{i=1}^s \sum_{\si \in \Gal(K(\eta_i)/K_{\chi})} {^{\si}\eta_i}.$$
    Then $w_{\chi} \eta(1) = \chi(1) = \sum_{i=1}^s [K(\eta_i):K_{\chi}] \eta_i(1) = s [K(\eta):K_{\chi}] \eta(1)$
    as desired.
\end{proof}

By \cite[Proposition 5]{towardII} there is a unique element
$\ga_{\chi} \in \zeta(\mc Q^c (G)e_{\chi})$ such that $\ga_{\chi}$ acts trivially on $V_{\chi}$ and
$\ga_{\chi} = \ga^{w_{\chi}} \cdot c = c \cdot \ga^{w_{\chi}}$, where $c \in (\qpc[H] e_{\chi})\mal$.
An analysis of the proof in fact shows that $c \in (E[H] e_{\chi})\mal$ if both $\chi$ and $\eta$ have realizations over $E$.
We can and do assume this in the following.
Again by \cite[Proposition 5]{towardII} the element $\ga_{\chi}$ generates a procyclic $p$-subgroup $\Ga_{\chi}$
of $(\mc Q^E (G)e_{\chi})\mal$.
Moreover, $\ga_{\chi}$ induces an isomorphism $\mc Q^E (\Ga_{\chi})
\stackrel{\simeq}{\lto} \zeta(\mc Q^E (G)e_{\chi})$ by \cite[Proposition 6]{towardII}.

We let $\si \in \Gal(E/K)$ act on $\mc Q^E(G) = E \otimes_K \mc Q^K(G)$ as $\si(x \otimes y) = \si(x) \otimes y$ for all $x \in E$ and $y \in \mc Q^K(G)$.
We then have
\begin{equation} \label{eqn:unique-gammachi}
    \si(e_{\chi}) = e_{^\si{\chi}}, \quad \si(\ga_{\chi}) = \ga_{^{\si}{\chi}}
\end{equation}
for every $\si \in \Gal(E/K)$; here, the latter equality follows from the uniqueness of $\ga_{^{\si}{\chi}}$.
By (\ref{eqn:e_chi}) we have $\si(e_{\chi}) = e_{\chi}$ whenever $\si \in \Gal(E/K_{\chi})$;
in particular, we have an action of $\Gal(E/K_{\chi})$ on $\zeta(\mc Q^E (G)e_{\chi})$.

For a finite extension $F$ of $\qp$ let $U_F^1$ denote the group of principal units in $F$.

\begin{lem} \label{lem:principal unit}
  Fix $\chi \in \irr(G)$.
  Then there is a principal unit $x = x_{\chi} \in U_E^1$ with the following properties.
  \ben[(i)]
    \item
    The element $\ga_{\chi}' := x \ga_{\chi}$ is invariant under the action of $\Gal(E/K_{\chi})$ on $\zeta(\mc Q^E (G)e_{\chi})$.
    \item
    Let $m \geq 0$ be an integer. If $\ga^{p^m}$ acts trivially on $V_{\chi}$, then $x^{p^m}$ belongs to $U_{K_{\chi}}^1$.
  \een
  Moreover, if $\Ga_{\chi}'$ denotes the procyclic $p$-subgroup generated by $\ga_{\chi}'$, then
  $$\mc Q^E(\Ga_{\chi})^{\Gal(E/K_{\chi})} = \mc Q^{K_{\chi}}(\Ga_{\chi}').$$
\end{lem}

\begin{proof}
  We first observe that all claims do not depend on the choice of $E$. To see this let $F$ be a second finite Galois
  extension of $K$ such that the character $\chi$ can be realized over $F$. Replacing $F$ by $EF$ we may assume $E \subseteq F$.
  If there is an $x \in U_E^1$ with (i) and (ii), then we may use the same $x$ for $F$ as $U_E^1 \subseteq U_F^1$.
  Conversely, suppose there is an $x \in U_F^1$ which fulfills (i) and (ii) with $E$ replaced by $F$. Let $\si \in \Gal(F/E)$
  be arbitrary. Then $\si$ acts trivially on $\ga_{\chi}$ as $\ga_{\chi} \in \zeta(\mc Q^E (G)e_{\chi})$, and it acts trivially
  on $\ga_{\chi}'$ by (i). Hence $\si(x) = \si(\ga_{\chi}') \si(\ga_{\chi})\me = \ga_{\chi}' \ga_{\chi}\me = x$
  and thus $x \in U_F^1 \cap E = U_E^1$ as desired. Finally, as $\Gal(F/E)$ acts trivially on $\ga_{\chi}$,
  we have $\mc Q^F(\Ga_{\chi})^{\Gal(F/E)} = \mc Q^E(\Ga_{\chi})$, and thus the last statement of the Lemma
  does not depend on $E$ either.

  We now may assume that $E$ is obtained from $K_{\chi}$ by adjoining roots of unity.
  More precisely, the group $G / \ker (\chi)$ is finite as $\chi$ has open kernel,
  and by \cite[Theorem 15.16]{CR-I} we may take $E = K_{\chi}(\zeta)$,
  where $\zeta$ is a root of unity of order $|G / \ker (\chi)|$.
  Let us denote by $\mu_p(E)$ the group of all $p$-power roots of unity in $E$.
  Let $\si \in \Gal(E/K_{\chi})$ be arbitrary.
  Then $\res^G_H \chi = \res^G_H {^{\si}\chi}$ and thus ${^{\si}\chi} = \chi \otimes \rho_{\si}$ for some character $\rho_{\si}$ of type $W$.
  Hence
  $\si(\ga_{\chi}) = \ga_{^{\si}\chi} = \zeta_{\si} \ga_{\chi}$ with $\zeta_{\si} := \rho_{\si}(\ga)^{-w_{\chi}} \in \mu_p(E)$
  by \cite[Proposition 5]{towardII}.
  The assignment
  $$\Gal(E/K_{\chi}) \to \mu_p(E), \quad \si \mapsto \zeta_{\si} = \ga_{\chi}^{\si-1}$$
  is a $1$-cocycle.
  Let $E_0$ be the maximal unramified extension of $K_{\chi}$
  in $E$. Then $E = E_0(\mu_p(E))$ and thus $H^1(\Gal(E/E_0), \mu_p(E)) = 1$ by \cite[Proposition 9.1.6]{NSW}.
  By the inflation-restriction sequence this yields an
  isomorphism
  \beq \label{eqn:H1-iso}
    H^1(\Gal(E_0/K_{\chi}), \mu_p(E_0)) \simeq H^1(\Gal(E/K_{\chi}), \mu_p(E)).
  \eeq
  The natural map $H^1(\Gal(E_0/K_{\chi}), \mu_p(E_0)) \to H^1(\Gal(E_0/K_{\chi}), U_{E_0}^1)$
  is trivial, as the latter group vanishes by \cite[Proposition 7.1.2]{NSW}. This and (\ref{eqn:H1-iso})
  imply that there is an $a \in \mu_p(E) \cdot U_{E_0}^1 \subseteq U_E^1$
  such that $\zeta_{\si} = a^{\si-1}$ for all $\si \in \Gal(E/K_{\chi})$. We put $x := a\me$;
  then $\ga_{\chi}' := x \ga_{\chi}$ is easily seen to be invariant under the action of $\Gal(E/K_{\chi})$.

  Now let $m \geq 0$ be an integer such that $\ga^{p^m}$ acts trivially on $V_{\chi}$.
  Then also $c^{p^m} = (\ga_{\chi} \ga^{-w_{\chi}})^{p^m}$ acts trivially on $V_{\chi}$ and belongs to $(E[H]e_{\chi})\mal$.
  So we must have $c^{p^m} = e_{\chi}$.
  We deduce that $\ga_{\chi}^{p^m} = \ga^{p^m w_{\chi}} e_{\chi}$
  and thus $\si(\ga_{\chi}^{p^m}) = \ga_{\chi}^{p^m}$ for every $\si \in \Gal(E/K_{\chi})$.
  We obtain
  \begin{eqnarray*}
        \si(x^{p^m}) &  = & \si((\ga_{\chi}')^{p^m}) \si(\ga_{\chi}^{p^m})\me \\
            & = & (\ga_{\chi}')^{p^m} (\ga_{\chi}^{p^m})\me \\
            & = & x^{p^m};
  \end{eqnarray*}
  hence $x^{p^m}$ belongs to $U_E^1 \cap K_{\chi} = U_{K_{\chi}}^1$.

  For the last assertion, we observe that $U_E^1$ is a $\zp$-module and therefore
  \beq \label{eqn:hida}
    \La^{\fo_{E}}(\Ga_{\chi}) = \La^{\fo_E}(\Ga_{\chi}')
  \eeq
  as $(\ga_{\chi}')^z = x^z \ga_{\chi}^z$
  for every $z \in \zp$ (see also \cite[Lemma 1, p.199]{Hida_elementary}). Hence also $\mc Q^E(\Ga_{\chi}) = \mc Q^E(\Ga_{\chi}')$ and thus
  $$\mc Q^E(\Ga_{\chi})^{\Gal(E/K_{\chi})} = \mc Q^E(\Ga_{\chi}')^{\Gal(E/K_{\chi})} = \mc Q^{K_{\chi}}(\Ga_{\chi}')$$
  as desired.
\end{proof}

\begin{rem}
  The principal unit $x$ and the element $\ga_{\chi}'$ are unique up to a principal unit in $K_{\chi}$. To see this let $y \in U_E^1$
  be a second principal unit which fulfills (i) and (ii) of Lemma \ref{lem:principal unit} with $\ga_{\chi}'$
  replaced by $\ga_{\chi}'' := y \ga_{\chi}$. Then $yx\me = \ga_{\chi}'' (\ga_{\chi}')\me$ is a principal unit
  which is invariant under $\Gal(E/K_{\chi})$ and thus belongs to $U_{K_{\chi}}^1$.
\end{rem}

\example Assume that $G = H \times \Ga$ is a direct product. Then $w_{\chi} = 1$ and $\res^G_H \chi = \eta$ is irreducible.
If $\chi(\ga) = \chi(1)$, then $\ga_{\chi} = \ga e_{\chi}$ and we may choose $\ga_{\chi}' = \ga_{\chi}$. If $\chi(\ga) \not= \chi(1)$, we may write
$\chi = \chi' \otimes \rho$ with $\chi'(\ga) = \chi'(1) = \chi(1)$ and some character $\rho$ of type $W$. By \cite[Proposition 5]{towardII}
we have $\ga_{\chi} = \ga_{\chi'} \rho(\ga)\me = \rho(\ga)\me \ga e_{\chi}$. Then $x = \rho(\ga)$ fulfills (i) and (ii) of Lemma \ref{lem:principal unit}
and we find that $\ga_{\chi}' = \ga_{\chi'} = \ga e_{\chi} = \ga_{\chi'}'$.

\begin{defi}
  Let $\chi, \psi \in \irr(G)$. We say that $\chi$ and $\psi$ are equivalent over $K$ (and write $\chi \sim_{K} \psi$)
  if there is $\si \in \Gal(K_{\chi} / K)$ such that $^{\si}(\res^{G}_H \chi) = \res^{G}_H \psi$.
\end{defi}

Note that if $\chi \sim_K \psi$, then we have $K_{\chi} = K_{\psi}$ and via (\ref{eqn:unique-gammachi}) an isomorphism
$\mc Q^{K_{\chi}}(\Ga_{\chi}') \simeq \mc Q^{K_{\psi}}(\Ga_{\psi}')$. Moreover, we have
$\chi \sim_K \chi \otimes \rho$ whenever $\rho$ is a character of type $W$.

\begin{prop} \label{prop:centers}
Let $G$ be a $p$-adic Lie group of dimension $1$ and let $K$ be a finite extension of $\qp$.
Then there is an isomorphism
$$\zeta(\mc Q^K(G)) \simeq \bigoplus_{\chi \in \irr(G)/ \sim_K} \mc Q^{K_{\chi}}(\Ga_{\chi}'),$$
where the sum runs through all $\chi \in \irr(G)$ up to the above equivalence relation.
\end{prop}

\begin{proof}
Since there are only finitely many central primitive idempotents $e_{\chi}$ of $\mc Q^c(G)$, we may choose a finite Galois extension $E$ of $K$
such that $E[H]$ contains each $e_{\chi}$.
We will use the fact that the center of $\mc Q^K(G)$ coincides with the
$\Gal(E/K)$-invariants of $\zeta(\mc Q^E(G))$.
For this let $\si$ be an element of $\Gal(E/K)$. Fix $\chi, \psi \in \irr(G)$.
Then $\si(e_{\chi}) = e_{^{\si}\chi}$, and
$\si(e_{\chi}) = e_{\psi}$ if and only if $\res^{G}_H \psi = {^{\si}(\res^{G}_{H} \chi)}$.
In particular, the action of $\Gal(E/K)$ on $e_{\chi}$ factors through $\Gal(K_{\chi}/K)$, and we may write
\begin{equation} \label{eqn:def-of-ve}
    \zeta(\mc Q^K(G)) = \bigoplus_{\chi \in \irr(G)/ \sim_K} \zeta(\mc Q^K(G) \ve_{\chi}), \quad
    \ve_{\chi} := \sum_{\si \in \Gal(K_{\chi} / K)} \si(e_{\chi}) \in \zeta(\mc Q^K(G)).
\end{equation}
Now let $\beta \in \zeta(\mc Q^K(G) \ve_{\chi})$. We view $\beta$ as an element in $\zeta(\mc Q^E(G) \ve_{\chi})$ which is
invariant under $\Gal(E/K)$. We may therefore write
$$\beta = (\beta_{\si})_{\si} \in \bigoplus_{\si \in \Gal(K_{\chi} / K)} \zeta(\mc Q^E(G) \si(e_{\chi})) \simeq \bigoplus_{\si \in \Gal(K_{\chi} / K)} \mc Q^E(\Ga_{^{\si}\chi})$$
where, by abuse of notation, $\si$ denotes also a chosen lift of $\si$ in $\Gal(E/K)$.
As we have already mentioned before, the uniqueness of $\ga_{\chi}$ implies that $\ga_{^{\si}\chi} = \si(\ga_{\chi})$ for every $\si \in \Gal(E/K)$;
thus $\beta$ is determined by $\beta_1$, and
$\beta_1$ lies in $\mc Q^E(\Ga_{\chi})^{\Gal(E/K_{\chi})} = \mc Q^{K_{\chi}}(\Ga_{\chi}')$, where the equality is Lemma \ref{lem:principal unit}.
Now $\beta \mapsto \beta_1$ induces an isomorphism $\zeta(\mc Q^K(G) \ve_{\chi}) \simeq \mc Q^{K_{\chi}}(\Ga_{\chi}')$.
\end{proof}

\begin{rem}
In the special case, where $K = \qp$ and $G$ is a pro-$p$-group, a similar result has been established by Lau \cite{Irene_Lau}
using a different method. The same is true for Corollary \ref{cor:chi-equation} below.
\end{rem}

\begin{cor}
Let $K$ be a finite extension of $\qp$ with ring of integers $\fo$.
Choose a maximal $R$-order $\ti \La^{\fo}(G)$ containing $\La^{\fo}(G)$. Then
$$\zeta(\ti \La^{\fo}(G)) \simeq \bigoplus_{\chi \in \irr(G)/ \sim_K} \La^{\fo_{\chi}}(\Ga_{\chi}'),$$
where $\fo_{\chi}$ denotes the ring of integers in $K_{\chi}$.
\end{cor}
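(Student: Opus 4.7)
The idea is to combine Proposition~\ref{prop:centers} with the standard characterization of the center of a maximal order as an integral closure. The base ring $R = \fo[[T]]$ is a regular, in particular integrally closed, Noetherian domain with fraction field $L$, and $\mc Q^K(G)$ is a separable $L$-algebra containing $\La^\fo(G)$ as an $R$-order. Under these hypotheses it is standard that the center of any maximal $R$-order containing $\La^\fo(G)$ coincides with the integral closure of $R$ inside $\zeta(\mc Q^K(G))$: the inclusion $\zeta(\ti\La^\fo(G)) \subset \overline{R}^{\zeta(\mc Q^K(G))}$ is automatic from integrality of $\ti\La^\fo(G)$ over $R$, and the reverse inclusion follows from maximality, since any central $s$ integral over $R$ yields an order $\ti\La^\fo(G)[s]$ extending $\ti\La^\fo(G)$.

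Using the isomorphism $\zeta(\mc Q^K(G)) \simeq \bigoplus_{\chi/\sim}\mc Q^{K_\chi}(\Ga_\chi)$ of Proposition~\ref{prop:centers} and the fact that normalization commutes with finite direct product decompositions of commutative algebras, the claim reduces to showing that, for each $\chi$, the integral closure of $R$ in the field $\mc Q^{K_\chi}(\Ga_\chi)$ equals $\La^{\fo_\chi}(\Ga_\chi) = \fo_\chi[[\Ga_\chi]]$.

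For this I would verify two points. First, $\fo_\chi[[\Ga_\chi]]$ is itself integrally closed in $\mc Q^{K_\chi}(\Ga_\chi)$: identifying $\Ga_\chi \simeq \zp$ via $\ga_\chi \mapsto 1+X$ turns it into the complete two-dimensional regular local ring $\fo_\chi[[X]]$, which is a UFD, hence normal, with fraction field $\mc Q^{K_\chi}(\Ga_\chi)$. This gives $\overline{R}^{\mc Q^{K_\chi}(\Ga_\chi)} \subset \fo_\chi[[\Ga_\chi]]$. Second, $\fo_\chi[[\Ga_\chi]]$ is integral over $R$. Since $\fo_\chi/\fo$ is finite, this reduces to integrality of $\ga_\chi$ over $\fo_\chi[[\ga^{p^n}-1]]$. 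Here I would exploit the relation $\ga_\chi = \ga^{w_\chi} c$ with $c \in (E[H] e_\chi)^\times$ recorded just before Proposition~\ref{prop:centers}, combined with the centrality of $\ga^{p^n}$ in $G$ and the uniqueness properties of $\ga_\chi$ established in \cite{towardII}, to realize some $p$-power $\ga_\chi^{p^m}$ as a unit multiple in $\fo_\chi$ of a power of $\ga^{p^n}$; this gives a monic polynomial over $R[\fo_\chi]$ vanishing on $\ga_\chi$.

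The main obstacle is exactly this last integrality step: it requires extracting from \cite{towardII} a sufficiently precise description of $c$ and of the cyclic group $\Ga_\chi$ relative to the central cyclic subgroup $\langle \ga^{p^n}\rangle$ of $G$. Everything else is formal, given Proposition~\ref{prop:centers} and the standard theory of maximal orders over integrally closed Noetherian domains.
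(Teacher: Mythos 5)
Your proposal is correct and is essentially the argument the paper intends, since the paper states this corollary without proof as an immediate consequence of Proposition~\ref{prop:centers} together with the standard fact that the center of a maximal $R$-order equals the integral closure of $R$ in the center of the ambient separable algebra. All the structural steps you give (center of maximal order equals integral closure; normalization commutes with direct products; $\fo_\chi[[\Ga_\chi]]$ is a two-dimensional complete regular local ring, hence normal) are right.

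The one place you hedge, the integrality of $\ga_\chi$ over $R$, is not actually an obstacle. The paper records in Remark~\ref{rem:embedding} that for $n$ large the embedding $R = \fo[[T]] \hookrightarrow \La^{\fo_\chi}(\Ga_\chi)$ is given by $1+T \mapsto \ga_\chi^{p^n/w_\chi}$, which rests on the identity $\ga_\chi^{p^n} = (\ga^{p^n})^{w_\chi} e_\chi$ coming from the Ritter--Weiss construction of $\ga_\chi$ (once $\ga^{p^n}$ acts trivially on $V_\chi$). This exhibits $\ga_\chi$ as a root of the monic polynomial $X^{p^n/w_\chi} - (1+T) \in R[X]$, so $\ga_\chi$ is integral over $R$ outright, and since $\fo_\chi$ is finite over $\fo \subset R$, the ring $\fo_\chi[[\Ga_\chi]]$ is finite and integral over $R$. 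No finer analysis of the unit $c \in (E[H]e_\chi)^\times$ is needed. With that step closed, your argument is complete.
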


\begin{proof}
  This follows from Proposition \ref{prop:centers} as $\La^{\fo_{\chi}}(\Ga_{\chi}')$ is the integral closure of $R$
  in $\mc Q^{K_{\chi}}(\Ga_{\chi}')$.
\end{proof}

\begin{rem} \label{rem:embedding}
Here, $\ti \La^{\fo}(G)$ is an order over $R = \fo[[T]]$, where we have identified $1+T$ with $\ga^{p^n}$ for a chosen large $n$.
Let us fix a $\chi \in \irr(G)$ in each equivalence class over $K$.
If $m = m(\chi)$ is sufficiently large, then $\ga^{p^m}$ acts trivially on
$V_{\chi}$ and hence $\ga_{\chi}^{p^m} = (\ga^{p^m})^{w_{\chi}} e_{\chi}$.
Enlarging $n$ and $m$ if necessary (for the finitely many $\chi$), we may assume that $p^n = p^m \cdot w_{\chi}$.
Hence we may also assume that the inclusion
$$R = \fo[[T]] \into \La^{\fo_{\chi}}(\Ga_{\chi}')$$
is induced by $1 + T \mapsto \ga_{\chi}^{p^n / w_{\chi}} = x^{-p^m} (\ga_{\chi}')^{p^m} \in \La^{\fo_{\chi}}(\Ga_{\chi}')$,
where $x^{-p^m}$ belongs to $U_{K_{\chi}}^1$ by  Lemma \ref{lem:principal unit}.
We will fix such an $n$ for the rest of the paper.
\end{rem}

\begin{cor} \label{cor:chi-equation}
The algebra $\mc Q^K(G)$ has Wedderburn decomposition
$$\mc Q^K(G) \simeq  \bigoplus_{\chi \in \irr(G)/ \sim_K} (D_{\chi})_{n_{\chi} \times n_{\chi}},$$
where $n_{\chi} \in \N$ and $D_{\chi}$ is a skewfield with center $\mc Q^{K_{\chi}}(\Ga_{\chi}')$. If $s_{\chi}$ denotes the Schur index of $D_{\chi}$, then
we have an equality $\chi(1) = n_{\chi} s_{\chi}$.
\end{cor}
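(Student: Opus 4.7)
The plan is to combine the Wedderburn--Artin theorem with the center computation of Proposition~\ref{prop:centers}, and then establish the matrix size formula by first handling a splitting field and descending.

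I would begin by verifying that $\mc Q^K(G)$ is semisimple: over $\qpc$ the $e_\chi$ are the primitive central idempotents of $\mc Q^c(G)$ and each $\mc Q^c(G)e_\chi$ is simple (corollary to \cite[Proposition 6]{towardII}), so $\mc Q^c(G)$ is semisimple and hence so is its subalgebra of $\Gal(\qpc/K)$-invariants $\mc Q^K(G)$. Wedderburn--Artin gives $\mc Q^K(G) \simeq \bigoplus_j M_{n_j}(D_j)$ with skewfields $D_j$; comparing the centers, which form a direct sum of fields, with the decomposition $\bigoplus_{\chi/\sim}\mc Q^{K_\chi}(\Ga_\chi)$ from Proposition~\ref{prop:centers} identifies the Wedderburn components with equivalence classes of characters and yields $\zeta(D_\chi) \simeq \mc Q^{K_\chi}(\Ga_\chi)$. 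The Schur index $s_\chi$ is then defined by $[D_\chi : \zeta(D_\chi)] = s_\chi^2$.

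The heart of the argument is to show that over a finite Galois extension $E/K$ containing $K_\chi$ and splitting $\chi$ one has $\mc Q^E(G)e_\chi \simeq M_{\chi(1)}(\mc Q^E(\Ga_\chi))$. I would prove this first in the $\Ga$-stable case $w_\chi = 1$: then $e_\eta = e_\chi$ is central in $\mc Q^E(G)$, and by \cite[Proposition 5]{towardII} the element $\ga_\chi$ commutes with $E[H]e_\eta \simeq M_{\eta(1)}(E)$, so there is a natural surjection of $E$-algebras
\[
\mc Q^E(\Ga_\chi)\otimes_E M_{\eta(1)}(E) \;\twoheadrightarrow\; \mc Q^E(G)e_\eta
\]
from commuting subalgebras; an $L_E$-dimension count (using $[\mc Q^E(\Ga_\chi):L_E]=p^n$ from Remark~\ref{rem:embedding} and $\dim_{L_E}\mc Q^E(G)e_\eta = p^n\eta(1)^2$ from the decomposition $\mc Q^E(G)=\bigoplus_{i=0}^{p^n-1}L_E\ga^i[H]$) upgrades it to an isomorphism. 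For general $w_\chi = w$, orthogonality of the central idempotents in $E[H]$ yields the direct computation $e_\eta\mc Q^E(G)e_\eta = \mc Q^E(St(\eta))e_\eta$; the $w = 1$ case applied inside $St(\eta)$ identifies this with $M_{\eta(1)}(\mc Q^E(\Ga_\chi))$, so $e_\eta$ has rank $\eta(1)$ inside $\mc Q^E(G)e_\chi$, and the $w$ mutually orthogonal $\Ga$-conjugates $e_{\eta^{\ga^i}}$ summing to $e_\chi$ force $\mc Q^E(G)e_\chi \simeq M_{w\eta(1)}(\mc Q^E(\Ga_\chi)) = M_{\chi(1)}(\mc Q^E(\Ga_\chi))$.

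With this in hand, Galois descent finishes the proof: the isomorphism $E\otimes_K\mc Q^K(G)\ve_\chi \simeq \bigoplus_{\si\in\Gal(K_\chi/K)}\mc Q^E(G)e_{\chi^\si}$ (reversing the argument of Proposition~\ref{prop:centers}) together with the preservation of the dimension of a central simple algebra over its center under extension of the center gives $n_\chi^2 s_\chi^2 = \chi(1)^2$, whence $\chi(1) = n_\chi s_\chi$. The main obstacle is the matrix-algebra identification in the $w = 1$ case: one cannot argue directly from the irreducible representation $V_\chi$ because it is in general not a $\mc Q^E(G)$-module --- when $\ga^{p^n}$ acts as the identity on $V_\chi$ the image of $T$ is zero and cannot be inverted --- so the explicit $L_E$-dimension count is genuinely necessary.
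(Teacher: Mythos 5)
Your proof is correct, but it takes a genuinely different route from the paper's on the key step. The paper notes that the Wedderburn decomposition and the identification of the centers are immediate from Proposition~\ref{prop:centers}, and then derives $\chi(1) = n_\chi s_\chi$ by a short dimension chase whose crucial input is the identity $\dim_{\mc Q^E(\Ga_\chi)}(\mc Q^E(G)e_\chi) = \chi(1)^2$, cited directly from the proof of \cite[Proposition 6]{towardII}. You instead re-derive that identity --- and in fact the sharper statement $\mc Q^E(G)e_\chi \simeq M_{\chi(1)}(\mc Q^E(\Ga_\chi))$ --- by hand: first in the $w_\chi = 1$ case via the surjection $\mc Q^E(\Ga_\chi)\otimes_E E[H]e_\eta \twoheadrightarrow \mc Q^E(G)e_\eta$ from commuting subalgebras together with an $L'$-dimension count, then in general by computing $e_\eta\mc Q^E(G)e_\eta = \mc Q^E(St(\eta))e_\eta$ and using the $w_\chi$ orthogonal $\Ga$-conjugates of $e_\eta$. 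This buys you a self-contained treatment and, in passing, a proof that $\mc Q^E(G)e_\chi$ splits over $E$, which the paper establishes separately in Theorem~\ref{theo:splitting-field} by a different idempotent argument (analyzing the commutant $f_\eta\mc Q^E(G)e_\chi f_\eta$ of an indecomposable idempotent $f_\eta$ of $L'[H]e_\eta$); the cost is some overlap with the Ritter--Weiss construction. Both proofs end with the same Galois-descent dimension comparison between $A_\chi$ over its center $\mc Q^{K_\chi}(\Ga_\chi)$ and $E\otimes_K A_\chi \simeq \bigoplus_\si \mc Q^E(G)e_{\chi^\si}$. Your closing observation that $V_\chi$ is not a $\mc Q^E(G)$-module (since $T = \ga^{p^n}-1$ acts as $0$ on $V_\chi$) and hence cannot be used directly to exhibit the matrix structure is correct, and it explains why the explicit dimension count is unavoidable.
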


\begin{proof}
All assertions are immediate from Proposition \ref{prop:centers} apart from the last equality.
Let us denote the simple component $(D_{\chi})_{n_{\chi} \times n_{\chi}}$ by $A_{\chi}$.
With $E$ sufficiently large as in Proposition \ref{prop:centers} we compute
\bea
    (n_{\chi} s_{\chi})^2 & = & \dim_{\mc Q^{K_{\chi}}(\Ga_{\chi}')}(A_{\chi})\\
        & = & [K_{\chi} : K]\me \cdot \dim_{\mc Q^{K}(\Ga_{\chi}')}(A_{\chi})\\
        & {(i) \atop =} & [K_{\chi} : K]\me \cdot \dim_{\mc Q^{E}(\Ga_{\chi})}(E \otimes_K A_{\chi})\\
        & {(ii) \atop =} & [K_{\chi} : K]\me \cdot \sum_{\si \in \Gal(K_{\chi}/K)} \dim_{\mc Q^E(\Ga_{\chi})}(\mc Q^E(G) e_{^{\si}{\chi}})\\
        & {(iii) \atop =} & [K_{\chi} : K]\me \cdot \sum_{\si \in \Gal(K_{\chi}/K)} {^{\si}\chi(1)^2}\\
        & = & \chi(1)^2
\eea
Here, (i) follows from the equality $\mc Q^{E}(\Ga_{\chi}') = \mc Q^{E}(\Ga_{\chi})$ which was established
in the proof of Lemma \ref{lem:principal unit} (confer equation (\ref{eqn:hida})).
The isomorphism $E \otimes_K A_{\chi} \simeq \bigoplus_{\si \in \Gal(K_{\chi} / K)} \mc Q^E(G) e_{^{\si}{\chi}}$
implies (ii), and (iii)
is shown in the proof of \cite[Proposition 6]{towardII}.
\end{proof}

\begin{rem}
In the case $K=\qp$ and $G$ a pro-$p$-group one can determine the occurring skewfields explicitly
(see \cite[Theorem 1]{Irene_Lau} and also (\ref{eqn:skewfield-iso}) below).
\end{rem}

\begin{theo} \label{thm:splitting-field}
Fix $\chi \in \irr(G)$ and let
$A_{\chi} = (D_{\chi})_{n_{\chi} \times n_{\chi}}$ be the corresponding simple component of $\mc Q^K(G)$.
Then $E \otimes_K A_{\chi}$ splits whenever one (and hence every) irreducible constituent $\eta$ of $\res^G_H \chi$
can be realized over $E$. More precisely, we then have an isomorphism
$$E \otimes_K A_{\chi} \simeq \bigoplus_{\si \in \Gal(K_{\chi}/K)} \left( \mc Q^E(\Gamma_{\chi}') \right)_{\chi(1) \times \chi(1)}.$$
\end{theo}

\begin{proof}
Recall that $L = \mc Q^K(\Ga^{p^n})$ for our fixed sufficiently large $n$.
Let $E$ be a finite extension of $K$ and let $L' := E \otimes_K L = \mc Q^E(\Ga^{p^n})$.
As $L'$-vector space and more generally as left $L'[H]$-module, we have a decomposition
$$\mc Q^E(G) = \bigoplus_{i=0}^{p^n-1} L'[H] \ga^i.$$
Now fix $\chi \in \irr(G)$ and let $\eta$ be an irreducible constituent of $\res^G_H \chi$.
Suppose that $\eta$ has a realization over $E$. Recall the definition (\ref{eqn:def-of-ve}) of $\ve_{\chi}$.
As $K_{\chi} \subseteq K(\eta) \subseteq E$, we have a decomposition
$$E \otimes_K A_{\chi} = \mc Q^E(G) \ve_{\chi} = \bigoplus_{\si \in \Gal(K_{\chi} / K)} \mc Q^E(G) \si(e_{\chi}).$$
As the centers of $\mc Q^E(G) \si(e_{\chi})$, $\si \in \Gal(K_{\chi} / K)$ are isomorphic fields
via (\ref{eqn:unique-gammachi}) (compare also the proof of Proposition \ref{prop:centers}),
it suffices to show that $\mc Q^E(G)e_{\chi}$ splits.

Since $E$ is a subfield of $L'$ we have $L'[H] e(\eta) \simeq L'_{\eta(1) \times \eta(1)}$
and similarly for every other irreducible constituent of $\res^G_H \chi$. We obtain
\bea
    \mc Q^E(G)e_{\chi} & = & \bigoplus_{i=0}^{p^n-1} L'[H] e_{\chi} \ga^i\\
    & = & \bigoplus_{i=0}^{p^n-1} \bigoplus_{j=0}^{w_{\chi}-1} L'[H] e(\eta^{\ga^j}) \ga^i \\
    & \simeq & \bigoplus_{i=0}^{p^n-1} \bigoplus_{j=0}^{w_{\chi}-1} L'_{\eta(1) \times \eta(1)} \ga^i,
\eea
where we have used equation (\ref{eqn:e_chi}) for the second equality.
We now choose an indecomposable idempotent $f_{\eta} = f_{\eta} e(\eta)$ of $L'[H] e(\eta) \simeq L'_{\eta(1) \times \eta(1)}$.
Observe that for a second indecomposable idempotent $f'_{\eta}$ of $L'[H] e(\eta)$ we have an isomorphism $f_{\eta} L'[H] f'_{\eta} \simeq L'$. As $\mc Q^E(G)e_{\chi}$
is a simple algebra over its center $\mc Q^E(\Ga_{\chi}')$ by Corollary \ref{cor:chi-equation}, and $f_{\eta}$ is also an idempotent in $\mc Q^E(G)e_{\chi}$, it suffices to show
that $f_{\eta} \mc Q^E(G)e_{\chi} f_{\eta}$ is a field, namely $\mc Q^E(\Ga_{\chi}')$. For this, we first observe that
$f_{\eta,i} := \ga^i f_{\eta} \ga^{-i}$ is also
an indecomposable idempotent for every $0 \leq i < p^n$, and belongs to $L'[H] e(\eta^{\ga^i})$ as $\ga^i e(\eta) \ga^{-i} = e(\eta^{\ga^i})$ and $H$
is normal in $G$.
However, $e(\eta) = e(\eta^{\ga^i})$
if and only if $w_{\chi}$ divides $i$, and thus
$$f_{\eta} L'[H] f_{\eta,i} \simeq \left\{ \barr{ll} L' & \mbox{ if } w_{\chi} \mid i \\ 0 & \mbox{ otherwise.} \earr \right.$$
Since $e(\eta^{\ga^j}) \ga^i f_{\eta} = f_{\eta,i} e(\eta^{\ga^j}) \ga^i$ we therefore have
$$f_{\eta} \mc Q^E(G)e_{\chi} f_{\eta}  \simeq  \bigoplus_{i=0 \atop w_{\chi} \mid i}^{p^n-1} L' \ga^i
     =  \bigoplus_{i=0}^{w_{\chi}\me p^n-1} L' \ga^{w_{\chi} i}.$$
We conclude that $\dim_{L'}(f_{\eta} \mc Q^E(G)e_{\chi} f_{\eta}) = w_{\chi}\me p^n$.
However, recall that $L' = \mc Q^E(\Ga^{p^n})$ and $\ga^{p^n}$ identifies with $\ga_{\chi}^{p^n / w_{\chi}} = x^{-p^n/w_{\chi}} (\ga_{\chi}')^{p^n / w_{\chi}}$
by Remark \ref{rem:embedding}, where $x^{-p^n/w_{\chi}} \in U_{K_{\chi}}^1 \subseteq (L')\mal$. As $L'$-vector space we therefore have a decomposition
$$\mc Q^E(\Ga_{\chi}') = \bigoplus_{i=0}^{w_{\chi}\me p^n-1} L' (\ga_{\chi}')^i.$$
Thus we also have $\dim_{L'}(\mc Q^E(\Ga_{\chi}')) = w_{\chi}\me p^n$. Since $\mc Q^E(\Ga_{\chi}') = \zeta(\mc Q^E(G) e_{\chi})$ is contained in $f_{\eta} \mc Q^E(G)e_{\chi} f_{\eta}$,
we are done.
\end{proof}

\begin{cor}
There is a finite Galois extension $E$ of $K$ such that
$$\mc Q^E(G) \simeq \bigoplus_{\chi \in \irr(G)/ W} (\mc Q^E(\Ga_{\chi}))_{\chi(1) \times \chi(1)},$$
where the sum runs through all $\chi \in \irr(G)$ modulo twists with characters of type $W$.
In particular, no skewfields occur in the Wedderburn decomposition of the semi-simple algebra $\mc Q^E(G)$.
\end{cor}

\begin{proof}
  This is an immediate consequence of Theorem \ref{thm:splitting-field} and Corollary \ref{cor:chi-equation}
  once we observe that $\mc Q^E(\Ga_{\chi}) = \mc Q^E(\Ga_{\chi}')$ by (\ref{eqn:hida}) if $E$ is sufficiently large.
\end{proof}

\begin{cor} \label{cor:Schur-index}
  Write $\eta(1) = s_{\eta} n_{\eta}$, where $s_{\eta}$ denotes the Schur index of $\eta$.
  Then $s_{\chi}$ divides $s_{\eta} [K(\eta):K_{\chi}]$ and $n_{\eta}$ divides $n_{\chi}$.
\end{cor}

\begin{proof}
  By definition of the Schur index there is a field $E$ of minimal degree $s_{\eta}$ over $K(\eta)$ such that $\eta$ can be realized over $E$.
  However,
  \begin{eqnarray*}
    E \otimes_K A_{\chi} & \simeq & E \otimes_K \mc Q^{K_{\chi}}(\Gamma_{\chi}') \otimes_{\mc Q^{K_{\chi}}(\Gamma_{\chi}')} A_{\chi} \\
        & \simeq & \bigoplus_{\sigma \in \Gal(K_{\chi} / K)} E \otimes_{K_{\chi}} \mc Q^{K_{\chi}}(\Gamma_{\chi}')
            \otimes_{\mc Q^{K_{\chi}}(\Gamma_{\chi}')} A_{\chi}\\
        & \simeq & \bigoplus_{\sigma \in \Gal(K_{\chi} / K)} \mc Q^{E}(\Gamma_{\chi}') \otimes_{\mc Q^{K_{\chi}}(\Gamma_{\chi}')} A_{\chi}
  \end{eqnarray*}
  splits by Theorem \ref{thm:splitting-field}. Now \cite[Theorem 28.5]{MO} implies that $s_{\chi}$ divides
  $$[\mc Q^{E}(\Gamma_{\chi}'):\mc Q^{K_{\chi}}(\Gamma_{\chi}')] = [E:K_{\chi}] = s_{\eta} [K(\eta):K_{\chi}].$$
  Moreover, we have
  $$w_{\chi} s_{\eta} n_{\eta} = w_{\chi} \eta(1) = \chi(1) = n_{\chi} s_{\chi} \mid n_{\chi} s_{\eta} [K(\eta):K_{\chi}] \mid n_{\chi} s_{\eta} w_{\chi},$$
  where the last divisibility is due to Lemma \ref{lem:field-index}. Thus $n_{\eta}$ divides $n_{\chi}$ as claimed.
\end{proof}

\example Assume that $H$ is a $p$-group. Then $s_{\eta} = 1$ for all irreducible characters $\eta$ of $H$ by a result of Roquette \cite{Roquette}.
However, Lau \cite{Irene_Lau} gives examples, where $s_{\chi} = [K(\eta):K_{\chi}]$ is non-trivial. \\

Let us denote the global dimension of a ring $\La$ by $\gldim (\La)$.
The following lemma is only needed to justify Remark \ref{rem:condition-holds} below and can be skipped
on a first reading.

\begin{lem} \label{lem:first_technical}
Let $A$ be a separable $L$-algebra of finite dimension over $L$ and
let $\La$ be a maximal $R$-order in $A$.
If $A$ is split, then there is a maximal $\fo$-order $\De$ in $\La$ such that $R \otimes_{\fo} \De = \La$
(and thus $L \otimes_{\fo} \De = A$).
\end{lem}

\begin{proof}
There are natural numbers $k>0$ and $n_i$, $1\leq i \leq k$ such that $A = \bigoplus_{i=1}^k A_i$ and $A_i \simeq L_{n_i \times n_i}$.
We put $\ti\La_i :=  R_{n_i \times n_i}$ and $\ti\La := \bigoplus_{i=1}^k \ti\La_i$.
Observe that this is a maximal $R$-order in $A$ by \cite[Theorem 8.7]{MO}.
Then $\ti\De := \bigoplus_{i=1}^k \fo_{n_i \times n_i}$ is a maximal $\fo$-order in $\bigoplus_{i=1}^k K_{n_i \times n_i}$ and has
$R \otimes_{\fo} \ti\De = \ti\La$ and $L \otimes_{\fo} \ti\De = A$.
Since global dimension is invariant under Morita equivalence (cf.~\cite[Corollary, p.~476]{Ramras_MO}), we have
\[
    \gldim(\ti\La_i ) = \gldim(R) = 2.
\]
Moreover, $\ti\La_i / \rad(\ti\La_i)$ is a matrix ring over the residue field of $R$ and thus a simple artinian ring.
As likewise $\La$ decomposes into $\La = \bigoplus_{i=1}^k \La_i$, where each $\La_i$ is a maximal $R$-order in $A_i$,
we may apply \cite[Theorem 5.4]{Ramras_MO} in each simple component: there is a unit $a_i \in A_i$ such that $\La_i = a_i\me \ti\La_i a_i$.
We put $a := \sum_{i=1}^k a_i$ so that $\La = a\me \ti\La a$. We further put $\De := a\me \ti\De a$. Then $\De$ is a maximal $\fo$-order in $\La$,
and we have
$$R \otimes_{\fo} \De = a\me (R \otimes_{\fo} \ti\De) a = a\me \ti\La a = \La$$
as desired.
\end{proof}

\section{Traces and conductors}

Let $R$ be a noetherian integrally closed domain with quotient field $L$. If $A$ is a simple $L$-algebra, we denote by $\tr_{A/L}$
the reduced trace from $A$ to $L$. If $A$ is separable, then by \cite[Theorem 9.26]{MO}
the reduced trace gives rise to a symmetric associative nondegenerate bilinear form
$A \times A \to L$ which sends $(a,b)$ to $\tr_{A/L}(ab)$. Note that if $A=L'$ is a field, then $\tr_{L'/L} = \Tr_{L'/L}$
is the ordinary trace of fields. Now let $\La$ be an $R$-order in $A$. By a $\La$-lattice we mean a finitely generated $\La$-module
which is torsionfree as $R$-module. We consider duality
with respect to the trace form: For each full left $\La$-lattice $M$ in $A$ (that is $A \otimes_{\La} M = A$) we associate the dual right $\La$-lattice
$$D(M/R) := \left\{x \in A \mid \tr_{A/L}(xM) \subseteq R \right\}.$$
In particular, applying this construction to the two-sided $\La$-lattice $\La$
we obtain a two-sided $\La$-lattice $D(\La/R)$ which (by abuse of language) we will call the \emph{inverse different} of $\La$
with respect to $R$. If $D(\La/R)$ happens to be invertible, we call $\fD(\La/R) := D(\La/R)\me$ the \emph{different} of $\La$
with respect to $R$.

\begin{lem} \label{lem:product_differents}
    Let $R$ be a noetherian integrally closed domain with quotient field $L$ and let $A$ be a simple separable $L$-algebra.
    Let $L \subseteq L' \subseteq \zeta(A)$ be fields and denote the integral closure of $R$ in $L'$ by $R'$.
    Then for every maximal $R$-order $\La$ in A, we have
    $$D(\Lambda/R) \supseteq D(\La/R') D(R'/R)$$
    with equality if $D(R'/R)$ is invertible.
\end{lem}

\begin{proof}
  As $\La$ is also a maximal $R'$-order by \cite[Theorem 10.5]{MO},
  we have $\La = R' \La$ and $D(\La/R')$ is defined.
  Let $a \in D(\La/R')$ and $b \in D(R'/R)$ be arbitrary. Then
  $$\tr_{A/L}(ab \La) = \tr_{L'/L}(\tr_{A/L'}(ab \La)) = \tr_{L'/L}(b \tr_{A/L'}(a \La)) \subseteq \tr_{L'/L}(b R') \subseteq R$$
  as desired. Now assume that $D(R'/R) = \fD(R'/R)\me$ is invertible. We conclude
  \bea
    x \in D(\La/R) & \iff & \tr_{L'/L}(\tr_{A/L'}(x \La)) \subseteq R\\
    & \iff & \tr_{L'/L}(\tr_{A/L'}(x R' \La)) \subseteq R\\
    & \iff & \tr_{L'/L}(R' \tr_{A/L'}(x \La)) \subseteq R\\
    & \iff & \tr_{A/L'}(x \La) \subseteq D(R'/R)\\
    & \iff & \tr_{A/L'}(x \fD(R'/R)\La) \subseteq R'\\
    & \iff & x \fD(R'/R) \subseteq D(\La/R')\\
    & \iff & x \in D(\La/R') D(R'/R).
  \eea
\end{proof}

Now let $A$ be a semisimple $L$-algebra and let $\Tr$ be the ordinary trace map from $A$ to $L$. For every $x \in A$
we associate the homomorphism $\Tr_x \in \Hom_L(A,L)$ defined by $\Tr_x(a) := \Tr(xa)$ for all $a \in A$.
Let $\La$ be an $R$-order in $A$. For a full left $\La$-lattice $M$ in $A$ we may also consider duality with respect to the
ordinary trace form:
$$D_{\ord}(M/R) := \left\{x \in A \mid \Tr(xM) \subseteq R \right\} = \left\{x \in A \mid \Tr_x(M) \subseteq R \right\}$$
is a right $\La$-lattice in $A$. We have a canonical homomorphism of right $\La$-modules
\beq \label{eqn:dual-to-dual}
    \de_M: D_{\ord}(M/R)  \to  M^+ := \Hom_R(M,R), \quad
    x \mapsto \Tr_x|_M,
\eeq
where $\la \in \La$ acts on $f \in M^+$ as $f^{\la}(m) := f(\la m)$ for all $m \in M$.
Similar observations hold for full right $\La$-lattices in $A$.

Recall that an $R$-module $M$ is called {\it reflexive} if the canonical map $M \to M^{++}$, $m \mapsto \left[f \mapsto f(m)\right]$
is an isomorphism.

\begin{prop} \label{prop:double-dual}
    Let $R$ be a noetherian integrally closed domain with quotient field $L$. Let $A$ be a semisimple $L$-algebra and let $\La$
    be an $R$-order in $A$.
    If the ordinary trace $\Tr$ gives rise to a nondegenerate bilinear form $A \times A \to L$, $(a,b) \mapsto \Tr(ab)$,
    then the homomorphism $\de_M$ in (\ref{eqn:dual-to-dual}) is an isomorphism for every full $\La$-lattice $M$ in $A$. In particular,
    $D_{\ord}(D_{\ord}(M/R)/R) = M$ if and only if $M$ is reflexive.
\end{prop}

\begin{proof}
  As the bilinear form $A \times A \to L$, $(a,b) \mapsto \Tr(ab)$ is nondegenerate, we have $\Tr_x = \Tr_y$
  if and only if $x=y$. It follows that the map $\de_A: A \to \Hom_L(A,L)$, $x \mapsto \Tr_x$ is injective
  and thus an isomorphism of $L$-vector spaces as both sides have the same dimension over $L$.
  It is also a homomorphism of right $A$-modules.
  Now let $M$ be a full left $\La$-lattice in $A$. Then $\de_M$ is just the restriction of $\de_A$
  to $D_{\ord}(M/R)$ and thus injective. However, by \cite[Remark p.268]{NSW} restriction to $M$ yields an isomorphism
  \bea
    M^+ & \simeq & \left\{\phi \in \Hom_L(A,L) \mid \phi(M) \subseteq R \right\}\\
    & = & \left\{\Tr_x \mid x \in A, ~ \Tr_x(M) \subseteq R \right\}\\
    & = & \left\{\Tr_x \mid x \in D_{\ord}(M/R) \right\}.
  \eea
  Hence the image of $\de_M$ is $M^+$ as claimed.
\end{proof}

We now return to the case $L = Quot(R)$, where $R = \fo[[T]]$ is the power series ring in one variable over the ring of integers $\fo$ in $K$.
Recall that as an $L$-vector space we have $\mc Q^K(G) = \bigoplus_{i=0}^{p^n-1} L[H] \ga^i = \bigoplus_{i=0}^{p^n-1} L \ga^i[H]$.
We denote by $\Tr$ the ordinary trace map from $\mc Q^K(G)$ to $L$.

\begin{lem} \label{lem:trace-dualbasis}
The elements $\ga^i h$, $0\leq i < p^n$, $h \in H$ form an $L$-basis of $\mc Q^K(G)$ and
$$\Tr(\ga^i h) = \left\{ \barr{ll} p^n |H| & \mbox{ if } \ga^i = h = 1\\ 0 & \mbox{ otherwise.}\earr\right.$$
Its dual basis with respect to $\Tr$ is given by $(p^n|H|)\me h\me \ga^{-i}$, $0\leq i < p^n$, $h \in H$.
\end{lem}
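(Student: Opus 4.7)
The first assertion is immediate from the decomposition $\mc Q^K(G) = \bigoplus_{i=0}^{p^n-1} L\ga^i[H]$ already recorded in the introductory discussion: each summand $L\ga^i[H]$ has $\{\ga^i h : h \in H\}$ as an $L$-basis, and collecting over $i$ gives the desired basis of size $p^n|H|$.

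For the trace formula I would compute the matrix of left multiplication by $\ga^i h$ on this basis. Using normality of $H$ in $G$, write
\[
    (\ga^i h)(\ga^j h') = \ga^{i+j}(\ga^{-j} h \ga^j) h' = \ga^{i+j} h'' h',
\]
where $h'' := \ga^{-j} h \ga^j \in H$. If $i+j < p^n$ this is precisely the basis vector $\ga^{i+j}(h''h')$; otherwise $\ga^{i+j} = (1+T)\,\ga^{i+j-p^n}$ and we get $(1+T)$ times the basis vector $\ga^{i+j-p^n}(h''h')$. A diagonal contribution therefore requires the $\ga$-exponent to be unchanged, i.e.\ either $i=0$ (first case) or $i = p^n$ (second case, impossible since $i < p^n$). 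In the first case the $H$-part matches only when $h'' h' = h'$, i.e.\ $h''=1$, i.e.\ $h=1$. So only $\ga^i h = 1$ yields a nonzero diagonal, and in that case every diagonal entry equals $1$, giving $\Tr(1) = p^n|H|$.

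For the dual basis claim the natural approach is a direct verification: I compute, for arbitrary $(i,h)$ and $(j,h')$,
\[
    (\ga^i h)\bigl((h')^{-1}\ga^{-j}\bigr) = \ga^{i-j}\bigl(\ga^{j} h (h')^{-1} \ga^{-j}\bigr),
\]
again using that conjugation by $\ga^{\pm j}$ preserves $H$. If $(i,h)=(j,h')$ the right-hand side is $1$, and its trace is $p^n|H|$, so dividing by $p^n|H|$ gives $1$ as required. Otherwise the expression is, up to a scalar in $L^\times$ (namely $1$ or $(1+T)^{-1}$ depending on the sign of $i-j$), a basis vector $\ga^k h^*$ with $(k,h^*)\neq (0,1)$: indeed $i=j$ forces $h\neq h'$ and hence $h^* = \ga^j h(h')^{-1}\ga^{-j}\neq 1$, while $i\neq j$ means the $\ga$-exponent $k\in\{i-j,\,i-j+p^n\}$ is nonzero mod $p^n$. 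By the trace computation above the trace is then $0$, which by $L$-linearity is unaffected by the scalar factor.

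The only genuinely delicate point is keeping track of the reduction $\ga^{p^n} = 1+T$ when $i+j \geq p^n$ (and symmetrically when $i-j < 0$ in the dual-basis calculation); I expect this bookkeeping to be the sole place where care is needed, but it cannot produce spurious diagonal contributions because the shift in the $\ga$-exponent is always a nonzero multiple that cannot be cancelled inside the allowed range $0\le k < p^n$.
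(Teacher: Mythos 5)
Your proof is correct and takes essentially the same approach as the paper: compute the matrix of left multiplication by $\ga^i h$ on the basis $\{\ga^j h'\}$, show that a diagonal entry can only appear when $i=0$ and $h=1$, and then verify the dual-basis claim by a direct calculation. The paper is slightly terser at the point where you track the factor $(1+T)$: it instead writes the putative scalar as $x = \ga^i$ and observes that $\ga^i \notin L$ for $0 < i < p^n$, which handles the wrap-around case without explicitly invoking the identification $\ga^{p^n} = 1+T$. Your bookkeeping of that case is a harmless (and arguably clearer) unpacking of the same fact; the paper also leaves the dual-basis verification as ``easily checked,'' which you supply in full.
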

\begin{proof}
It is clear that these elements form an $L$-basis of $\mc Q^K(G)$.
Now let $0 \leq i < p^n$ and $h \in H$. We use the same basis to compute $\Tr(\ga^i h)$. For every $0 \leq j < p^n$ and $h' \in H$, we have to write
$\ga^i h \ga^j h'$ as an $L$-linear combination of these basis elements. However,
$\ga^i h \ga^j h' = \ga^{i+j} h_j h'$ with $h_j := \ga^{-j} h \ga^j \in H$.
This actually belongs to this basis if $i+j < p^n$; if $i+j \geq p^n$, then $\ga^{i+j} h_j h' = \ga^{p^n} \ga^{i+j-p^n} h_j h'$ with $\ga^{p^n} \in L = \mc Q^K(\Ga^{p^n})$. Since only the diagonal entries of the corresponding $p^n |H| \times p^n |H|$ matrix contribute to $\Tr(\ga^i h)$,
we now suppose that $\ga^j h' = \ga^{i+j} h_j h'$ if $i+j<p^n$, and that $\ga^j h' = \ga^{i+j-p^n} h_j h'$ if $i+j \geq p^n$.
In both cases we must have $h_j = 1$ and thus $h=1$. In the first case, we also have $j=i+j$ which forces $i=0$, whereas in the second
case $j = i+j-p^n$ which gives $i = p^n$. This contradicts $i<p^n$ and so the latter case does not occur.
We see that this matrix has all diagonal entries equal to zero if $\ga^i h \not= 1$;
hence $\Tr(\ga^i h) = 0$ in this case. That $\Tr(1) = p^n |H|$ is clear.
It is now easily checked that $(p^n|H|)\me h\me \ga^{-i}$, $0\leq i < p^n$, $h \in H$ is the dual basis.
\end{proof}

\begin{lem} \label{lem:different_welldef}
Let $K'$ be a finite field extension of $K$ with ring of integers $\fo'$ and let $a$ be a principal unit in some finite extension of $K'$.
Let $n \in \N_0$ and suppose that $a^{p^n}$ belongs to $U_{K'}^1$. Consider $\La^{\fo'}(\Ga)$ as $R$-order via the embedding
$$\iota_{a,n}: R \into \La^{\fo'}(\Ga), \quad 1+T \mapsto (a \ga)^{p^n}.$$
Then the inverse different
$D(\La^{\fo'}(\Ga) / R) = \left\{x \in \mc Q^{K'}(\Ga) \mid \Tr_{\mc Q^{K'}(\Ga) / L}(x \La^{\fo'}(\Ga)) \subseteq R \right\}$
is given by
$$D(\La^{\fo'}(\Ga) / R) = p^{-n} \fD\me(\fo'/ \fo) \La^{\fo'}(\Ga),$$
where $\fD\me(\fo'/ \fo)$ denotes the usual inverse different of $\fo'$ with respect to $\fo$.
In particular, $D(\La^{\fo'}(\Ga) / R)$ is invertible.
\end{lem}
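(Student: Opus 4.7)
The strategy is to decompose the extension into the two-step tower
\[
R \;\subset\; \fo'[[T]] \;\subset\; \La^{\fo'}(\Ga),
\]
where the first inclusion is the coefficient extension (well-defined since $\fo'$ is finite free over $\fo$, so $\fo'[[T]] \cong \fo' \otimes_{\fo} R$) and the second comes from $1+T \mapsto \ga^{p^n}$. I would compute the inverse different for each step separately and combine them via the tower formula $\mc D\me(C/A) = \mc D\me(C/B) \cdot \mc D\me(B/A)$, which is a formal consequence of the transitivity
\[
\Tr_{\mc Q^{K'}(\Ga)/L} \;=\; \Tr_{\mc Q^{K'}(\Ga^{p^n})/L} \,\circ\, \Tr_{\mc Q^{K'}(\Ga)/\mc Q^{K'}(\Ga^{p^n})}.
\]

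For the upper extension, I would apply Lemma \ref{lem:trace-dualbasis} with the group taken to be $\Ga$ (so with trivial finite part) and the base field taken to be $K'$, so that the ``$L$'' of that lemma becomes $\mc Q^{K'}(\Ga^{p^n})$. This produces the basis $1, \ga, \ldots, \ga^{p^n-1}$ with dual basis $p^{-n}, p^{-n}\ga^{-1}, \ldots, p^{-n}\ga^{-(p^n-1)}$ under $\Tr_{\mc Q^{K'}(\Ga)/\mc Q^{K'}(\Ga^{p^n})}$. Since the same powers of $\ga$ also form an $\fo'[[T]]$-basis of $\La^{\fo'}(\Ga)$, this immediately yields
\[
\mc D\me(\La^{\fo'}(\Ga)/\fo'[[T]]) = p^{-n}\La^{\fo'}(\Ga).
\]
For the lower extension, any $\fo$-basis $e_1, \ldots, e_d$ of $\fo'$ is at the same time an $R$-basis of $\fo'[[T]]$, and the trace pairing on $\fo'[[T]]/R$ is the $R$-linear extension of the trace pairing on $\fo'/\fo$. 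Hence the $R$-dual basis lies inside $\mc D\me(\fo'/\fo)$, and one obtains $\mc D\me(\fo'[[T]]/R) = \mc D\me(\fo'/\fo) \cdot \fo'[[T]]$. Multiplying the two inverse differents produces exactly the stated formula.

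The only mild obstacle is that we are not in the classical Dedekind setting: both $R$ and $\fo'[[T]]$ are two-dimensional regular local rings with infinite residue field, so the standard theory of differents for maximal orders over a DVR with finite residue field does not apply directly. However, the inverse different is defined purely through the trace pairing, and separability of $\mc Q^{K'}(\Ga)$ over $L$ guarantees non-degeneracy at each step, so the tower argument carries over formally. I expect no further difficulty.
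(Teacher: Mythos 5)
Your proof is correct and follows essentially the same route as the paper's: a tower reduction separating the ``coefficient extension'' $\fo \to \fo'$ from the ``group-index extension'' $\Ga^{p^n} \subset \Ga$, with each step handled by exhibiting an explicit free basis and its trace-dual basis. The only difference is the choice of intermediate ring: you use $R \subset \fo'[[T]] \subset \La^{\fo'}(\Ga)$ (coefficients first, then group), whereas the paper takes $R \subset \La^{\fo}(\Ga) \subset \La^{\fo'}(\Ga)$ (group first, then coefficients). Since the two operations commute, both towers compute the same product $p^{-n}\mc D\me(\fo'/\fo)\La^{\fo'}(\Ga)$, and neither order offers any real advantage.

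One remark on rigor: the multiplicativity $\mc D\me(C/A) = \mc D\me(C/B)\cdot\mc D\me(B/A)$ is \emph{not} a purely formal consequence of trace transitivity. The inclusion $\mc D\me(C/B)\cdot\mc D\me(B/A) \subset \mc D\me(C/A)$ is formal, but the reverse inclusion requires the intermediate inverse different $\mc D\me(B/A)$ to be an invertible (e.g.\ principal) $B$-ideal, or else a direct dual-basis computation on the full tower. In your application this is harmless: $\mc D\me(\fo'[[T]]/R) = \mc D\me(\fo'/\fo)\,\fo'[[T]]$ is principal because $\fo'$ is a discrete valuation ring, and in any case both steps of your tower are free extensions so the combined dual basis $\{e_i^{*}\,p^{-n}\ga^{-j}\}$ can be checked directly. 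It would be cleaner to say this explicitly rather than appeal to the tower formula as ``formal.''
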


\begin{proof}
First, we consider the case $a=1$ and $n=0$.
If $x_1, \dots, x_k$ form an $\fo$-basis of $\fo'$,
then $x_1,\dots,x_k$ are also an $\fo[[T]]$-basis of $\fo'[[T]]$ which is isomorphic to $\La^{\fo'}(\Ga)$ via $1+T \mapsto \ga$. Hence its dual basis with respect to the ordinary
trace $\Tr_{K'/K}$ of fields, is also a dual basis with respect to $\Tr_{\mc Q^{K'}(\Ga) / L}$.
Hence $D(\La^{\fo'}(\Ga) / R)$ equals $\fD\me(\fo'/ \fo) \La^{\fo'}(\Ga)$ and is invertible in this case.
For the general case let $\iota_{a,n}': \La^{\fo'}(\Ga) \into \La^{\fo'}(\Ga)$ be induced from $\ga \mapsto (a \ga)^{p^n}$.
Then $\iota_{a,n} = \iota_{a,n}' \circ \iota_{1,0}$. We may therefore assume $\fo = \fo'$ by
the first part of the proof and an application of Lemma \ref{lem:product_differents}.
The inverse different only depends upon the image of $R$ in $\La^{\fo}(\Ga)$. However, $\iota_{a,n}(R) = \iota_{1,n}(R)$
as $a^{p^n} \in U_{K'}^1$. Thus we may also assume $a=1$. But in this case the result follows
from Lemma \ref{lem:trace-dualbasis} with $G = \Ga$.
\end{proof}

By Corollary \ref{cor:chi-equation} we may write $\mc Q^K(G) = \bigoplus_{\chi \in \irr(G)/ \sim_K} A_{\chi}$,
where $A_{\chi} = (D_{\chi})_{n_{\chi} \times n_{\chi}}$, $n_{\chi} \in \N$ and $D_{\chi}$ is a skewfield
with Schur index $s_{\chi}$ and center $\mc Q^{K_{\chi}}(\Ga_{\chi}')$. Again by Corollary \ref{cor:chi-equation}, we have $\chi(1) = s_{\chi} n_{\chi}$.
Hence the ordinary trace may be written as
\beq \label{eqn:trace}
    \Tr = \sum_{\chi \in \irr(G)/ \sim_K} \chi(1) \tr_{\chi},
\eeq
where $\tr_{\chi}$ denotes the reduced trace from $A_{\chi}$ to $L$. Moreover, we have
\beq \label{eqn:trace_composition}
    \tr_{\chi} = \Tr_{\mc Q^{K_{\chi}}(\Ga_{\chi}') / L} \circ \tr_{A_{\chi} / \mc Q^{K_{\chi}}(\Ga_{\chi}')},
\eeq
where $\Tr_{\mc Q^{K_{\chi}}(\Ga_{\chi}') / L}$ denotes the ordinary trace of fields and $\tr_{A_{\chi} / \mc Q^{K_{\chi}}(\Ga_{\chi}')}$
denotes the reduced trace from $A_{\chi}$ into its center. Recall from Remark \ref{rem:embedding} that we have fixed a sufficiently large integer $n \geq 0$
such that $R = \fo[[T]]$ embeds into
$\La^{\fo_{\chi}}(\Ga_{\chi}')$ via $1+T \mapsto (x\me \ga_{\chi}')^{p^n / w_{\chi}}$. Now Lemma \ref{lem:product_differents} and
Lemma \ref{lem:different_welldef} (with the embedding
$\iota_{a, m}$, where $a = x^{-1}$ and $p^m = p^n/w_{\chi}$) imply that the following definition does not depend on the choice of the embedding.

\begin{defi}
Choose a maximal $R$-order $\ti \La^{\fo}(G)$ containing $\La^{\fo}(G)$. We have a decomposition $\ti \La^{\fo}(G) = \bigoplus_{\chi \in \irr(G)/ \sim_K} \ti \La^{\fo}_{\chi}(G)$,
where each $\ti \La^{\fo}_{\chi}(G)$ is a maximal $R$-order in $A_{\chi}$.
For sufficiently large $n$ we call the two-sided $\ti \La^{\fo}_{\chi}(G)$-lattice
$$D_{\chi}(\ti \La^{\fo}(G)) = D_{\norm}(\tilde\La^{\fo}_{\chi}(G) / R) := p^n D(\tilde\La^{\fo}_{\chi}(G) / R)
 = p^n \cdot \left\{x \in A_{\chi}\mid \tr_{\chi}(x \tilde\La^{\fo}_{\chi}(G)) \subseteq R \right\}$$
the {\it normalized inverse different}.
\end{defi}


\begin{defi}
Let $\La \subseteq\ti\La$ be a pair of rings. Then
$$(\ti\La: \La)_l := \left\{x \in \ti\La \mid x \ti\La \subseteq\La \right\}$$
is called the {\it left conductor} of $\ti\La$ into $\La$. Similarly,
$$(\ti\La: \La)_r := \left\{x \in \ti\La \mid \ti\La x \subseteq\La \right\}$$
is called the {\it right conductor} of $\ti\La$ into $\La$.
\end{defi}

\begin{prop} \label{prop:conductor-reflexive}
  Let $\ti \La^{\fo}(G)$ be a maximal $R$-order containing $\La^{\fo}(G)$.
  Then $\La^{\fo}(G)$, the maximal order $\ti \La^{\fo}(G)$ and the left and right conductor
  $(\ti \La^{\fo}(G) : \La^{\fo}(G))_l$ and $(\ti \La^{\fo}(G) : \La^{\fo}(G))_r$ are reflexive (and thus free) $R$-modules.
\end{prop}

\begin{proof}
  We first observe that $R$ is a $2$-dimensional regular local ring and thus every reflexive $R$-module is free (and vice versa)
  by \cite[Proposition 5.1.9]{NSW}. As $\La^{\fo}(G)$ is free over $R$, it is also reflexive. Moreover, every maximal $R$-order
  is reflexive by \cite[Theorem 11.4]{MO}. Finally, let $M := (\ti \La^{\fo}(G) : \La^{\fo}(G))_r$
  (the argument for $(\ti \La^{\fo}(G) : \La^{\fo}(G))_l$ is similar).
  We denote by $P(R)$ the set of prime ideals of $R$ of height $1$.
  As $M$ is $R$-torsionfree, the proof of \cite[Proposition 5.1.8]{NSW} shows that
  we have an injection $M \into M^{++} = \bigcap_{\fp \in P(R)} M_{\fp}$, where the equality is \cite[Lemma 5.1.2]{NSW}.
  Now let $x \in \bigcap_{\fp \in P(R)} M_{\fp}$ be arbitrary; we have to show that $x \in M$.
  For every $\fp \in P(R)$ we have $x \in \ti \La^{\fo}(G)_{\fp}$ and $\ti \La^{\fo}(G)_{\fp} x \subseteq \La^{\fo}(G)_{\fp}$.
  Thus $x \in \bigcap_{\fp \in P(R)} \ti \La^{\fo}(G)_{\fp} = \ti \La^{\fo}(G)$ and
  $$\ti \La^{\fo}(G) x  = \bigcap_{\fp \in P(R)} \ti \La^{\fo}(G)_{\fp} x \subseteq \bigcap_{\fp \in P(R)} \La^{\fo}(G)_{\fp} = \La^{\fo}(G),$$
  that is $x \in M$.
\end{proof}

We now establish the following analogue of Jacobinski's conductor formula \cite[Theorem 27.8]{CR-I}.

\begin{theo} \label{thm:global_conductor}
Let $\ti \La^{\fo}(G)$ be a maximal $R$-order containing $\La^{\fo}(G)$. Then
$$(\ti \La^{\fo}(G) : \La^{\fo}(G))_l = (\ti \La^{\fo}(G) : \La^{\fo}(G))_r =
    \bigoplus_{\chi \in \irr(G)/ \sim_K} \frac{|H|}{\chi(1)} D_{\chi}(\ti \La^{\fo}(G)).$$
\end{theo}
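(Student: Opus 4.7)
The plan is to mimic Jacobinski's duality-based argument, working with the nondegenerate trace form $\Tr: \mc Q^K(G) \to L$ afforded by separability of $\mc Q^K(G)$ over $L$. For any full $R$-lattice $M \subset \mc Q^K(G)$, set $M^{*} := \{y \in \mc Q^K(G) : \Tr(yM) \subset R\}$; by cyclicity of $\Tr$ this coincides with $\{y : \Tr(My) \subset R\}$. The reflexivity fact I need is only $M^{**} = M$ when $M$ is a free $R$-lattice with an explicitly known dual basis, which will apply to $\La^{\fo}(G)$.

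First I compute $\La^{\fo}(G)^{*}$. By Lemma \ref{lem:trace-dualbasis}, the dual $R$-basis of $\{\ga^i h\}_{0 \leq i < p^n,\, h \in H}$ is $\{(p^n|H|)^{-1} h^{-1}\ga^{-i}\}$, which up to the scalar $(p^n|H|)^{-1}$ is itself an $R$-basis of $\La^{\fo}(G)$. Hence $\La^{\fo}(G)^{*} = (p^n|H|)^{-1}\La^{\fo}(G)$. Next, using the decomposition $\ti\La^{\fo}(G) = \bigoplus_{\chi/\sim} \ti\La^{\fo}_{\chi}(G)$ together with equation (\ref{eqn:trace}), I compute componentwise: the $\chi$-component of $\ti\La^{\fo}(G)^{*}$ is $\{y \in A_{\chi} : \chi(1)\tr_{\chi}(y\,\ti\La^{\fo}_{\chi}(G)) \subset R\}$, which by the definition of the normalized inverse different equals $\chi(1)^{-1} p^{-n}\,\mc D^{-1}_{\chi}(\ti\La^{\fo}(G))$.

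The central duality step rewrites the conductor condition. For $x \in \ti\La^{\fo}(G)$, the cyclicity identity $\Tr(x\lambda y) = \Tr(\lambda\,yx)$ shows that $x\,\ti\La^{\fo}(G) \subset \La^{\fo}(G)$ is equivalent to $\La^{\fo}(G)^{*}\cdot x \subset \ti\La^{\fo}(G)^{*}$, using $\La^{\fo}(G)^{**} = \La^{\fo}(G)$. Substituting the two formulas above and cancelling the common factor $p^{-n}$, this condition becomes
\[
\La^{\fo}(G)\cdot x \;\subset\; \bigoplus_{\chi/\sim} \frac{|H|}{\chi(1)}\,\mc D^{-1}_{\chi}(\ti\La^{\fo}(G)).
\]
Since $1 \in \La^{\fo}(G)$, the containment forces $x$ itself to lie in the right-hand side. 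Conversely, each $\mc D^{-1}_{\chi}(\ti\La^{\fo}(G))$ is a two-sided $\ti\La^{\fo}_{\chi}(G)$-ideal in $A_{\chi}$, so the direct sum is a two-sided ideal of $\ti\La^{\fo}(G)$, hence a $\La^{\fo}(G)$-bimodule, and membership of $x$ automatically gives the containment. This proves the formula for the left conductor. The right conductor is handled by the symmetric identity $\Tr(\lambda xy) = \Tr(xy\,\lambda)$, which gives instead $x\cdot \La^{\fo}(G)^{*} \subset \ti\La^{\fo}(G)^{*}$; this reduces by the same computation to the same bimodule inclusion, and hence to the same answer.

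The principal technical point to track carefully is the bookkeeping of left versus right sides through the dualization — in particular that $(x\,\ti\La^{\fo}(G))^{*} = \{y : yx \in \ti\La^{\fo}(G)^{*}\}$ without requiring $x$ to be invertible — together with the verification that each summand $\mc D^{-1}_{\chi}(\ti\La^{\fo}(G))$ is a genuine two-sided $\ti\La^{\fo}_{\chi}(G)$-ideal, so that both the converse implication and the coincidence of the left and right conductors follow cleanly.
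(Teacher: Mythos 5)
Your proof is correct and follows the same route the paper intends: the paper's one-line proof points to the duality argument in \cite[Theorem 27.8]{CR-I}, and your write-up is precisely that argument adapted to the present setting, with $\La^{\fo}(G)^{*} = (p^n|H|)^{-1}\La^{\fo}(G)$ supplied by Lemma \ref{lem:trace-dualbasis} and the componentwise computation of $\ti\La^{\fo}(G)^{*}$ supplied by equation (\ref{eqn:trace}). The bookkeeping of left versus right under the cyclicity of $\Tr$, the use of $\La^{\fo}(G)^{**}=\La^{\fo}(G)$, and the bimodule argument for the converse inclusion are all sound, so nothing is missing.
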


\begin{proof}
  Put $M := (\ti \La^{\fo}(G) : \La^{\fo}(G))_r$; then $M$ is the largest left $\ti \La^{\fo}(G)$-lattice in $\La^{\fo}(G)$.
  So its ordinary dual $D_{\ord}(M/R)$ is the smallest right $\ti \La^{\fo}(G)$-lattice containing $D_{\ord}(\La^{\fo}(G)/R)$.
  Since $\La^{\fo}(G)$ is free over $R$ with basis $\ga^i h$, $0 \leq i < p^n$, $h \in H$, we therefore have
  $$D_{\ord}(M/R) = D_{\ord}(\La^{\fo}(G)/R) \ti \La^{\fo}(G) = (p^n |H|)\me \ti \La^{\fo}(G),$$
  where we have used Lemma \ref{lem:trace-dualbasis} for the second equality. However, $M$ is reflexive by Proposition \ref{prop:conductor-reflexive}
  and so Proposition \ref{prop:double-dual} yields
  $$M = D_{\ord}(D_{\ord}(M/R)/R) = D_{\ord}((p^n |H|)\me \ti \La^{\fo}(G)/R) = p^n |H| D_{\ord}(\ti \La^{\fo}(G)/R).$$
  The theorem now follows from the equalities
  \begin{eqnarray*}
    D_{\ord}(\ti \La^{\fo}(G)/R) & = & \bigoplus_{\chi \in \irr(G)/ \sim_K} D_{\ord}(\ti \La^{\fo}_{\chi}(G)/R) \\
    D_{\ord}(\ti \La^{\fo}_{\chi}(G)/R) & = & \chi(1)\me D(\ti \La^{\fo}_{\chi}(G)/R) \\
        & = & (\chi(1)p^n)\me D_{\chi}(\ti \La^{\fo}(G)),
  \end{eqnarray*}
  as a similar argument works for $(\ti \La^{\fo}(G) : \La^{\fo}(G))_l$.
\end{proof}

\section{A formula for the central conductor}

\begin{defi}
Let $\ti\La^{\fo}(G)$ be a maximal $R$-order containing $\La^{\fo}(G)$. Then the central conductor of $\ti\La^{\fo}(G)$ into $\La^{\fo}(G)$ is defined to be
\bea
    \mc F(\La^{\fo}(G)) = \mc F(\ti\La^{\fo}(G) / \La^{\fo}(G)) & := & \zeta(\La^{\fo}(G)) \cap (\ti\La^{\fo}(G) : \La^{\fo}(G))_l \\
        & = & \zeta(\La^{\fo}(G)) \cap (\ti\La^{\fo}(G) : \La^{\fo}(G))_r \\
        & = & \left\{x \in \zeta(\La^{\fo}(G)) \mid x \ti\La^{\fo}(G) \subseteq\La^{\fo}(G)\right\}.
\eea
\end{defi}

\begin{rem}
We will see below that the central conductor only depends on $\La^{\fo}(G)$ and not on the choice of maximal order containing $\La^{\fo}(G)$.
Hence the notation $\mc F(\La^{\fo}(G))$ is justified.
\end{rem}

By Corollary \ref{cor:chi-equation} we may write $\mc Q^K(G) = \bigoplus_{\chi \in \irr(G)/ \sim_K} A_{\chi}$, where each $A_{\chi} \simeq (D_{\chi})_{n_{\chi} \times n_{\chi}}$ is simple.
Similarly, $\ti\La^{\fo}(G)$ decomposes into $\bigoplus_{\chi \in \irr(G)/ \sim_K} \ti\La^{\fo}_{\chi}(G)$, where each $\ti\La^{\fo}_{\chi}(G)$ is a maximal $R$-order in $A_{\chi}$
with center $\La^{\fo_{\chi}}(\Ga_{\chi}')$.
The central conductor is an ideal in $\zeta(\ti\La^{\fo}(G)) = \bigoplus_{\chi \in \irr(G)/ \sim_K} \La^{\fo_{\chi}}(\Ga_{\chi}')$ and
so we may write
$$\mc F(\ti\La^{\fo}(G) / \La^{\fo}(G)) = \bigoplus_{\chi \in \irr(G)/ \sim_K} \mc F_{\chi}(\ti\La^{\fo}(G) / \La^{\fo}(G)),$$
where each $\mc F_{\chi}(\ti\La^{\fo}(G) / \La^{\fo}(G))$
is an ideal in $\La^{\fo_{\chi}}(\Ga_{\chi}')$.
We define
$$d_{\chi} := \mc Q^{K_{\chi}}(\Ga_{\chi}') \cap D(\ti \La^{\fo}_{\chi}(G)/ \La^{\fo_{\chi}}(\Ga_{\chi}')).$$
We denote by $P_{\chi}$ the set of prime ideals of $\La^{\fo_{\chi}}(\Ga_{\chi}')$ of height $1$.

\begin{lem} \label{lem:independence}
  The fractional $\La^{\fo_{\chi}}(\Ga_{\chi}')$-ideal $d_{\chi}$ is a reflexive $\La^{\fo_{\chi}}(\Ga_{\chi}')$-module which
  does not depend on the choice of maximal order $\ti\La^{\fo}(G)$.
\end{lem}

\begin{proof}
  We first show that $d_{\chi}$ is reflexive.
  As it is torsionfree, we have an injection $d_{\chi} \into d_{\chi}^{++} = \bigcap_{\fp \in P_{\chi}} (d_{\chi})_{\fp}$.
  Now let $x \in \bigcap_{\fp \in P_{\chi}} (d_{\chi})_{\fp}$. Then clearly $x \in \mc Q^{K_{\chi}}(\Ga_{\chi}')$
  and
  $$\tr_{A_{\chi} / \mc Q^{K_{\chi}}(\Ga_{\chi}')}(x \ti \La^{\fo}_{\chi}(G)_{\fp}) \subseteq \La^{\fo_{\chi}}(\Ga_{\chi}')_{\fp}$$
  for all $\fp \in P_{\chi}$. This implies
  \begin{eqnarray*}
    \tr_{A_{\chi} / \mc Q^{K_{\chi}}(\Ga_{\chi}')}(x \ti \La^{\fo}_{\chi}(G)) & \subseteq &
        \bigcap_{\fp \in P_{\chi}} \tr_{A_{\chi} / \mc Q^{K_{\chi}}(\Ga_{\chi}')}(x \ti \La^{\fo}_{\chi}(G)_{\fp}) \\
        & \subseteq & \bigcap_{\fp \in P_{\chi}} \La^{\fo_{\chi}}(\Ga_{\chi}')_{\fp} \\
        & = & \La^{\fo_{\chi}}(\Ga_{\chi}').
  \end{eqnarray*}
  Thus $x$ belongs to $\mc Q^{K_{\chi}}(\Ga_{\chi}') \cap D(\ti \La^{\fo}_{\chi}(G)/ \La^{\fo_{\chi}}(\Ga_{\chi}')) = d_{\chi}$
  as claimed.

  Now let $\breve\La^{\fo}(G)$ be a second maximal $R$-order containing $\La^{\fo}(G)$.
  Then likewise $\breve\La^{\fo}(G)$ decomposes into $\bigoplus_{\chi \in \irr(G)/ \sim_K} \breve\La^{\fo}_{\chi}(G)$,
  where each $\breve\La^{\fo}_{\chi}(G)$ is a maximal $R$-order in $A_{\chi}$. We put
  $$\breve d_{\chi} := \mc Q^{K_{\chi}}(\Ga_{\chi}') \cap D(\breve \La^{\fo}_{\chi}(G)/ \La^{\fo_{\chi}}(\Ga_{\chi}')).$$
  Fix a height $1$ prime ideal $\fp \in P_{\chi}$.
  By \cite[Proposition 3.5]{AG_MO} there is a unit $a_{\chi} \in A_{\chi}$
  such that $\breve\La^{\fo}_{\chi}(G)_{\fp} = a_{\chi} \ti\La^{\fo}_{\chi}(G)_{\fp} a_{\chi}^{-1}$.
  Now let $x \in \mc Q^{K_{\chi}}(\Ga_{\chi}')$. Then
  \begin{eqnarray*}
        \tr_{A_{\chi} / \mc Q^{K_{\chi}}(\Ga_{\chi}')}(x \breve \La^{\fo}_{\chi}(G)_{\fp})
            & = & \tr_{A_{\chi} / \mc Q^{K_{\chi}}(\Ga_{\chi}')}(x a_{\chi} \ti \La^{\fo}_{\chi}(G)_{\fp} a_{\chi}^{-1}) \\
            & = & \tr_{A_{\chi} / \mc Q^{K_{\chi}}(\Ga_{\chi}')}(a_{\chi} x \ti \La^{\fo}_{\chi}(G)_{\fp} a_{\chi}^{-1}) \\
            & = & \tr_{A_{\chi} / \mc Q^{K_{\chi}}(\Ga_{\chi}')}(x \ti \La^{\fo}_{\chi}(G)_{\fp}),
  \end{eqnarray*}
  where the second equality holds, since $x$ is central in $A_{\chi}$.
  Hence
  $$\tr_{A_{\chi} / \mc Q^{K_{\chi}}(\Ga_{\chi}')}(x \breve \La^{\fo}_{\chi}(G)_{\fp}) \subseteq \La^{\fo_{\chi}}(\Ga_{\chi}')_{\fp}
  \iff \tr_{A_{\chi} / \mc Q^{K_{\chi}}(\Ga_{\chi}')}(x \ti \La^{\fo}_{\chi}(G)_{\fp}) \subseteq \La^{\fo_{\chi}}(\Ga_{\chi}')_{\fp},$$
  that is $(d_\chi)_{\fp} = (\breve d_{\chi})_{\fp}$
  for all $\fp \in P_{\chi}$. As both $d_{\chi}$ and $\breve d_{\chi}$ are reflexive, this implies $d_{\chi} = \breve d_{\chi}$
  as desired.
\end{proof}

Let $\pi_{\chi}$ be a prime element in $\fo_{\chi}$
and put $\fp_{\chi} := \pi_{\chi} \La^{\fo_{\chi}}(\Ga_{\chi}') \in P_{\chi}$.

\begin{lem} \label{lem:d_chi}
  There is an integer $r_{\chi} \leq 0$ such that $d_{\chi} = \fp_{\chi}^{r_{\chi}}$.
  If the Schur index $s_{\chi}$ is not divisible by $p$, then $r_{\chi} = 0$ and thus $d_{\chi} = \La^{\fo_{\chi}}(\Ga_{\chi}')$.
\end{lem}

\begin{proof}
  Let $\De_{\chi}$ be a maximal order in $D_{\chi}$. Then $(\De_{\chi})_{n_{\chi} \times n_{\chi}}$ is a maximal order in $A_{\chi}$
  by \cite[Theorem 8.7]{MO}. Let $\al \in (\De_{\chi})_{n_{\chi} \times n_{\chi}}$ be the matrix with a $1$ in the upper left corner
  and zeros everywhere else. Let $x \in d_{\chi}$. Then in particular
  \beq \label{eqn:tr-of-x}
    \tr_{A_{\chi} / \mc Q^{K_{\chi}}(\Ga_{\chi}')}(x \al) = x \cdot \tr_{A_{\chi} / \mc Q^{K_{\chi}}(\Ga_{\chi}')}(\al)
     = x \cdot s_{\chi} \in \La^{\fo_{\chi}}(\Ga_{\chi}').
  \eeq
  Here, the first equality follows from linearity of the reduced trace as $x$ belongs to $\mc Q^{K_{\chi}}(\Ga_{\chi}')$.
  For the second equality we compute
  \[
    \tr_{A_{\chi} / \mc Q^{K_{\chi}}(\Ga_{\chi}')}(\al) = \tr_{D_{\chi} / \mc Q^{K_{\chi}}(\Ga_{\chi}')}(1) = s_{\chi}.
  \]

  As $s_{\chi}$ is an integer, it follows from (\ref{eqn:tr-of-x}) that $x \in \La^{\fo_{\chi}}(\Ga_{\chi}')_{\fp}$
  for every $\fp \in P_{\chi}$, $\fp \not= \fp_{\chi}$.
  However, $d_{\chi}$ is reflexive by Lemma \ref{lem:independence} and thus we must have $d_{\chi} = \fp_{\chi}^{r_{\chi}}$ for some integer $r_{\chi}$.
  Since obviously $\La^{\fo_{\chi}}(\Ga_{\chi}') \subseteq d_{\chi}$, we have $r_{\chi} \leq 0$.
  Finally, if $p$ does not divide $s_{\chi}$, then by (\ref{eqn:tr-of-x}) we have
  $x \in \La^{\fo_{\chi}}(\Ga_{\chi}')_{\fp}$ for every $\fp \in P_{\chi}$ including $\fp_{\chi}$ and thus $r_{\chi} = 0$
  in this case.
\end{proof}

\begin{theo} \label{thm:central-conductor}
Let $\ti\La^{\fo}(G)$ be a maximal order containing $\La^{\fo}(G)$. Then $\mc F(\ti\La^{\fo}(G) / \La^{\fo}(G))$
does not depend on the choice of maximal order $\ti\La^{\fo}(G)$ and we have an equality
$$\mc F(\La^{\fo}(G)) = \mc F(\ti\La^{\fo}(G) / \La^{\fo}(G)) =
    \bigoplus_{\chi \in \irr(G)/ \sim_K} \frac{|H| w_{\chi}}{\chi(1)} \cdot \fD\me(\fo_{\chi} / \fo) d_{\chi}.$$
Here, $d_{\chi} = \fp_{\chi}^{r_{\chi}}$ for some integer $r_{\chi} \leq 0$.
In particular, we have an inclusion
\begin{equation} \label{eqn:conductor-inclusion}
  \frac{|H| w_{\chi}}{\chi(1)} \cdot \fD\me(\fo_{\chi} / \fo) \La^{\fo_{\chi}}(\Ga_{\chi}') \subseteq \mc F_{\chi}(\ti\La^{\fo}(G) / \La^{\fo}(G)),
\end{equation}
which becomes an equality after localization at every $\fp \in P_{\chi}$ different from $\fp_{\chi}$.
Moreover, the inclusion (\ref{eqn:conductor-inclusion}) is an equality in each of the following cases:
\ben[(i)]
    \item
    $G = H \times \Gamma$ is a direct product.
    \item
    The Schur index $s_{\chi}$ is not divisible by $p$.
    \item
    There is a maximal $\fo_{\chi}$-order $\De_{\chi}$ contained in $\ti\La^{\fo}_{\chi}(G)$
    such that $\La^{\fo_{\chi}}(\Ga_{\chi}') \otimes_{\fo_{\chi}} \De_{\chi} = \ti\La^{\fo}_{\chi}(G)$.
\een
\end{theo}

\begin{rem} \label{rem:condition-holds}
  Fix a $p$-adic Lie-group $G$ of dimension $1$. Then (ii) and (iii) hold for all $\chi$ whenever $K$ is sufficiently large.
  This follows from Theorem \ref{thm:splitting-field} and Lemma \ref{lem:first_technical}.
\end{rem}

\begin{rem}
  If $p$ does not divide the order of the commutator subgroup of $G$, then $\La^{\fo}(G)$ is a direct sum of
  matrix rings over commutative rings by \cite[Proposition 4.5]{JN_Fitting}. In particular, no skewfields occur in
  the Wedderburn decomposition of $\mc Q^K(G)$ and thus (ii) holds for every $\chi \in \irr(G)$.
\end{rem}

\begin{proof}[Proof of Theorem \ref{thm:central-conductor}]
Let us define
$$\de_{\chi}\me  :=  p^n \fD\me(\La^{\fo_{\chi}}(\Ga_{\chi}') / R) = w_{\chi} \cdot \fD\me(\fo_{\chi} / \fo) \La^{\fo_{\chi}}(\Ga_{\chi}'),$$
where the second equality follows from Lemma \ref{lem:different_welldef} and Remark \ref{rem:embedding}.

\begin{lem} \label{lem:different_eqn}
We have an equality $D_{\chi}(\ti\La^{\fo}(G)) = D(\ti \La^{\fo}_{\chi}(G)/ \La^{\fo_{\chi}}(\Ga_{\chi}')) \cdot \de_{\chi}\me$.
\end{lem}

\begin{proof}
This is a special case of Lemma \ref{lem:product_differents}.
\end{proof}

By Theorem \ref{thm:global_conductor} and the definition of the central conductor we obtain
$$\mc F_{\chi}(\ti\La^{\fo}(G) / \La^{\fo}(G)) =  \La^{\fo_{\chi}}(\Ga_{\chi}') \cap \left(\frac{|H|}{\chi(1)} \cdot D_{\chi}(\ti\La^{\fo}(G))\right)$$
for each $\chi \in \irr(G)$.
Hence it must be shown that
\beq \label{eqn:several_differents}
    \La^{\fo_{\chi}}(\Ga_{\chi}') \cap \left(\frac{|H|}{\chi(1)} \cdot D_{\chi}(\ti\La^{\fo}(G))\right) = \frac{|H|}{\chi(1)} \de_{\chi}\me d_{\chi}
\eeq
for each character $\chi$. We note that
$$\frac{|H|}{\chi(1)} \de_{\chi}\me d_{\chi} \subseteq\frac{|H|}{\chi(1)} D_{\chi}(\ti\La^{\fo}(G)) \subseteq\ti\La^{\fo}(G);$$
so each element of $\frac{|H|}{\chi(1)} \de_{\chi}\me d_{\chi}$ is integral over $R$, and thus lies in $\La^{\fo_{\chi}}(\Ga_{\chi}')$.
This gives one inclusion in (\ref{eqn:several_differents}).
Now let $y \in \La^{\fo_{\chi}}(\Ga_{\chi}')$. Then by Lemma \ref{lem:different_eqn} we have
$$y \in \frac{|H|}{\chi(1)} \cdot D_{\chi}(\ti\La^{\fo}(G))
    \iff y \de_{\chi} \subseteq\frac{|H|}{\chi(1)} D(\ti \La^{\fo}_{\chi}(G)/ \La^{\fo_{\chi}}(\Ga_{\chi}'))
    \iff y \de_{\chi} \subseteq\frac{|H|}{\chi(1)} d_{\chi}$$
giving the reverse inclusion in (\ref{eqn:several_differents}).
Lemma \ref{lem:independence} and Lemma \ref{lem:d_chi} imply the remaining assertions apart from the claim that
$r_{\chi} = 0$ in case (i) and (iii).

Let us first assume that $G = H \times \Ga$ is a direct product. Choose a maximal $\fo$-order $\fM(H)$ containing $\fo[H]$.
Let $\chi \in \irr(G)$. Then $\res^G_H \chi = \eta$ is an irreducible character of $H$ and
there is a character $\rho$ of type $W$ such that
$\chi \otimes \rho$ is trivial on $\Ga$. As $\chi \sim_K \chi \otimes \rho$, we may henceforth assume that $\rho=1$.
We then have $w_{\chi} = 1$, $K_{\chi} = K(\eta)$, $e_{\chi} = e(\eta)$ and $\ga_{\chi} = \ga e_{\chi} = \ga_{\chi}'$.
Then $\fM(H) \ve_{\chi}$ is a maximal $\fo_{\chi}$-order such that
$\ti\Lambda^{\fo}_{\chi}(G) := \La^{\fo_{\chi}}(\Ga) \otimes_{\fo_{\chi}} \fM(H) \ve_{\chi}$ is a maximal $R$-order
in $A_{\chi}$. It follows that (i) is a special case of (iii).

Now assume that (iii) holds and let $y \in (d_{\chi})_{\fp_{\chi}}$ be arbitrary.
We have to show that $y \in \La^{\fo_{\chi}}(\Ga_{\chi}')_{\fp_{\chi}}$.
As the reduced trace is $\mc Q^{K_{\chi}}(\Ga_{\chi}')$-linear,
we may alter $y$ by a unit in $\La^{\fo_{\chi}}(\Ga_{\chi}')_{\fp_{\chi}}$
so that we may assume
that $y = \pi_{\chi}^k$ for an appropriate integer $k$; in particular, we then have $y \in K_{\chi}$.
By hypothesis
there is a maximal $\fo_{\chi}$-order $\De_{\chi}$ contained in $\La^{\fo_{\chi}}(\Ga_{\chi}')$
such that $\mc Q^{K_{\chi}}(\Ga_{\chi}') \otimes_{\fo_{\chi}} \De_{\chi} = A_{\chi}$.
Hence an $\fo_{\chi}$-basis of $\De_{\chi}$ is also a $\mc Q^{K_{\chi}}(\Ga_{\chi}')$-basis of $A_{\chi}$ which we may use to compute
the reduced trace. Hence $y \in K_{\chi}$ has
$$\tr_{K_{\chi} \otimes_{\fo_{\chi}} \De_{\chi} / K_{\chi}} (y \De_{\chi}) \subseteq \La^{\fo_{\chi}}(\Ga_{\chi}')_{\fp_{\chi}} \cap K_{\chi} = \fo_{\chi}$$
and we have to show that $y \in \fo_{\chi}$; in other words we shall show that $K_{\chi} \cap \fD\me(\De_{\chi}/\fo_{\chi}) = \fo_{\chi}$.
Suppose that this is not true. Then $\fo_{\chi}$ is properly contained in $K_{\chi} \cap \fD\me(\De_{\chi}/\fo_{\chi})$.
Then $\fp_{\chi}\me \subseteq \fD\me(\De_{\chi}/\fo_{\chi})$
and thus $\fp_{\chi} \De_{\chi}$ contains $\fD(\De_{\chi}/\fo_{\chi})$. However, if $\fP_{\chi}$ denotes the radical of $\De_{\chi}$,
then $\fp_{\chi} \De_{\chi} = \fP_{\chi}^e$ for some positive integer $e$ and $\fD(\De_{\chi}/\fo_{\chi}) = \fP_{\chi}^{e-1}$
by \cite[Theorems 20.3 and 14.3]{MO}, a contradiction.
\end{proof}

\example Assume that $G = H \times \Ga$ is a direct product and thus
\beq \label{eqn:direct-product}
    \La^{\fo}(G) = \La^{\fo}(\Ga)[H] = \La^{\fo}(\Ga) \otimes_{\fo} \fo[H].
\eeq
Let $\chi \in \irr(G)$.
As we have seen in the proof of Theorem \ref{thm:central-conductor}
we then have $w_{\chi} = 1$, $K_{\chi} = K(\eta)$ and $\ga_{\chi}' = \ga e_{\chi}$. Hence
\[
    \mc F(\La^{\fo}(G))  =  \bigoplus_{\chi \in \irr(G) / \sim_K} \frac{|H|}{\chi(1)} \cdot \fD\me(\fo_{\chi} / \fo) \La^{\fo_{\chi}}(\Ga)
         =  \La^{\fo}(\Ga) \otimes_{\fo} \mc F(\fo[H])
\]
which can be shown more directly using (\ref{eqn:direct-product}) and Jacobinski's central conductor formula (\ref{eqn:central-conductor-J}).
So the main obstacle to derive Theorem \ref{thm:central-conductor} directly from Jacobinski's result is the fact that in general $G = H \rtimes \Ga$
is only semi-direct.\\

We finally determine the integers $r_{\chi}$ whenever $G$ is a pro-$p$-group. For simplicity we will also assume that $K=\qp$.
Then $K_{\chi} = \Q_{p,\chi}$ and $\fo_{\chi} =: \Z_{p,\chi}$ denotes the ring of integers in $\Q_{p,\chi}$.
Let $\fQ_{\eta}$ and $\fQ_{\chi}$ be the maximal ideals in $\zp[\eta]$ and $\Z_{p,\chi}$, respectively.
If $x$ is a rational number, we let $\lfloor x \rfloor \in \Z$ denote the largest integer such that $\lfloor x \rfloor \leq x$.

\begin{theo}
  Assume that $G$ is a pro-$p$-group and a $p$-adic Lie group of dimension $1$. Fix $\chi \in \irr(G)$
  and let $A_{\chi} \simeq (D_{\chi})_{n_{\chi} \times n_{\chi}}$ be the corresponding simple component of $\mc Q(G)$.
  Write $\fD(\zp[\eta]/\Z_{p,\chi}) = \fQ_{\eta}^{k_{\chi}}$ for some integer $k_{\chi} \geq 0$.
  Then $D_{\chi}$ is a cyclic skewfield with Schur index $s_{\chi} = [\qp(\eta):\Q_{p,\chi}]$ and
  $r_{\chi} = - \lfloor \frac{k_{\chi}}{s_{\chi}} \rfloor$.
\end{theo}

\begin{proof}
  By \cite[Theorem 1]{Irene_Lau} we know that $D_{\chi}$ is a cyclic skewfield with Schur index $s_{\chi} = [\qp(\eta):\Q_{p,\chi}]$.
  More precisely, the extension $\qp(\eta) / \Q_{p,\chi}$ is cyclic and totally ramified, as $G$ is pro-$p$ and thus
  $\qp(\eta)$ is a subfield of $\qp(\zeta_{p^m})$ for some positive integer $m$. Let $\si$ be a generator of the Galois group
  $\Gal(\qp(\eta) / \Q_{p,\chi})$. Then by \cite[Theorem 1]{Irene_Lau} the skewfield $D_{\chi}$ is given by
  \begin{equation} \label{eqn:skewfield-iso}
        D_{\chi} \simeq \bigoplus_{i=0}^{s_{\chi}-1} \left(\mc Q^{\qp(\eta)} (\Ga^{s_{\chi}}) \right) \ga^i,
  \end{equation}
  where $\ga^{s_{\chi}}$ is a topological generator of $\Ga^{s_{\chi}} \simeq \zp$ and for $x \in \mc Q^{\qp(\eta)} (\Ga^{s_{\chi}})$
  we have $\ga x = \si(x) \ga$ (note that this is not the same $\Ga$ as in $G = H \rtimes \Ga$;
  it is just a topological group isomorphic to $\zp$ which we again denote by $\Ga$). Then $D_{\chi}$ has center
  $\mc Q^{\Q_{p,\chi}}(\Ga^{s_{\chi}})$ which is isomorphic to $\mc Q^{\Q_{p,\chi}}(\Ga_{\chi}')$ under the isomorphism (\ref{eqn:skewfield-iso}).
  Now let $d = \sum_{i=0}^{s_{\chi}-1} x_i \ga^i \in D_{\chi}$ be arbitrary, $x_i \in \mc Q^{\qp(\eta)} (\Ga^{s_{\chi}})$, $0 \leq i < s_{\chi}$.
  We claim that
  \beq \label{eqn:trace-of-d}
    \tr_{D_{\chi} / \mc Q^{\Q_{p,\chi}}(\Ga^{s_{\chi}})}(d) = \Tr_{\mc Q^{\qp(\eta)} (\Ga^{s_{\chi}}) / \mc Q^{\Q_{p,\chi}} (\Ga^{s_{\chi}})}(x_0).
  \eeq
  For this one only has to check that we have an isomorphism
  \begin{eqnarray*}
    \qp(\eta) \otimes_{\Q_{p,\chi}} D_{\chi} & \simeq & (\mc Q^{\qp(\eta)}(\Ga))_{s_{\chi} \times s_{\chi}} \\
    x_0 & \mapsto & \left( \barr{cccc} x_0 & & & \\ & \si(x_0) & & \\ & & \ddots & \\ & & & \si^{s_{\chi}-1}(x_0) \earr \right) \\
    \ga & \mapsto & \left( \barr{cccc} 0 & & & \ga \\ \ga & \ddots & & 0 \\ & \ddots & 0 & \vdots \\ & & \ga & 0 \earr \right)
  \end{eqnarray*}
  From this one can easily compute the reduced trace in (\ref{eqn:trace-of-d}).
  Now put
  $$\Si_{\chi} := \bigoplus_{i=0}^{s_{\chi}-1} \left(\La^{\zp[\eta]} (\Ga^{s_{\chi}}) \right) \ga^i \subseteq D_{\chi}.$$
  Then $\Si_{\chi}$ is a $\La^{\Z_{p,\chi}} (\Ga^{s_{\chi}})$-order in $D_{\chi}$. Moreover,
  $\fQ_{\eta} \Si_{\chi}$ is a two-sided ideal of $\Si_{\chi}$, since $\ga \fQ_{\eta} = \si(\fQ_{\eta}) \ga = \fQ_{\eta} \ga$.
  We consider the $\La^{\Z_{p,\chi}} (\Ga^{s_{\chi}})_{\fp_{\chi}}$-order $\Si_{\fp_{\chi}} := (\Si_{\chi})_{\fp_{\chi}}$ in $D_{\chi}$.
  Then $\fQ_{\eta} \Si_{\fp_{\chi}}$ is again two-sided and we have
  $$(\fQ_{\eta} \Si_{\fp_{\chi}})^{s_{\chi}} = \fQ_{\chi} \Si_{\fp_{\chi}} = \fp_{\chi} \Si_{\fp_{\chi}} \subseteq \rad(\Si_{\fp_{\chi}}).$$
  Hence also $\fQ_{\eta} \Si_{\fp_{\chi}} \subseteq \rad(\Si_{\fp_{\chi}})$ by \cite[\S 6, Exercise 3]{MO}.
  We now observe that $\ga \mapsto 1+T$ induces an isomorphism
  $$\Si_{\chi} / \fQ_{\eta} \Si_{\chi} \simeq \F_p[[T]].$$
  Moreover, this isomorphism induces
  $$\Si_{\fp_{\chi}} / \fQ_{\eta} \Si_{\fp_{\chi}} \simeq \F_p((T))$$
  and thus the inclusion $\fQ_{\eta} \Si_{\fp_{\chi}} \subseteq \rad(\Si_{\fp_{\chi}})$
  is an equality.
  Moreover, the order $\Si_{\fp_{\chi}}$ is quasi-local.
  As $\fQ_{\eta}$ is a principal ideal, there is a short exact sequence
  \[
    0 \rightarrow \Si_{\fp_{\chi}} \rightarrow \Si_{\fp_{\chi}} \rightarrow \F_p((T)) \rightarrow 0
  \]
  and thus the projective dimension of $\F_p((T))$ considered as left $\Si_{\fp_{\chi}}$-module equals $1$.
  Now \cite[Corollary 1.3]{Ramras_MO} gives that also
  $\gldim (\Si_{\fp_{\chi}}) = 1$. It then follows from \cite[Theorem 2.3]{AG_MO} that
  $\Si_{\fp_{\chi}}$ is a maximal $\La^{\Z_{p,\chi}} (\Ga^{s_{\chi}})_{\fp_{\chi}}$-order. By (\ref{eqn:trace-of-d})
  we find that
  \begin{eqnarray*}
    \tr_{D_{\chi} / \mc Q^{\Q_{p,\chi}}(\Ga^{s_{\chi}})}(\Si_{\fp_{\chi}}) & = &
        \Tr_{\mc Q^{\qp(\eta)} (\Ga^{s_{\chi}}) / \mc Q^{\Q_{p,\chi}} (\Ga^{s_{\chi}})}(\La^{\zp[\eta]} (\Ga^{s_{\chi}})_{\fp_{\chi}}) \\
    & = & \Tr_{\qp(\eta) / \Q_{p,\chi}}(\zp[\eta]) \La^{\Z_{p,\chi}} (\Ga^{s_{\chi}})_{\fp_{\chi}}\\
    & = & \fQ_{\chi}^{r_{\chi}'} \La^{\Z_{p,\chi}} (\Ga^{s_{\chi}})_{\fp_{\chi}},
  \end{eqnarray*}
  where $r_{\chi}' \in \Z$ is maximal such that
  $$\zp[\eta] \subseteq \fQ_{\chi}^{r_{\chi}'} \fD(\zp[\eta]/\Z_{p,\chi})\me = \fQ_{\eta}^{s_{\chi} r_{\chi}' - k_{\chi}} $$
  and thus $r_{\chi}' = \lfloor \frac{k_{\chi}}{s_{\chi}} \rfloor$. Now let $x \in \mc Q^{\Q_{p,\chi}}(\Ga^{s_{\chi}})$.
  Then $x$ belongs to $(d_{\chi})_{\fp_{\chi}}$ if and only if
  $$\tr_{D_{\chi} / \mc Q^{\Q_{p,\chi}}(\Ga^{s_{\chi}})}(x \cdot \Si_{\fp_{\chi}}) = x \cdot \tr_{D_{\chi} / \mc Q^{\Q_{p,\chi}}(\Ga^{s_{\chi}})}(\Si_{\fp_{\chi}})
    \subseteq \La^{\Z_{p,\chi}} (\Ga^{s_{\chi}})_{\fp_{\chi}}$$
  and thus if and only if $x \in \fp_{\chi}^{- r_{\chi}'}$.
  Therefore we have $r_{\chi} = - r_{\chi}' = - \lfloor \frac{k_{\chi}}{s_{\chi}} \rfloor$ as desired.
\end{proof}

\example We continue the example in \cite[p. 1233]{Irene_Lau}. For this let $p=3$ and let $H$ be the cyclic group of order $9$.
Choose a generator $h$ of $H$. We put $G = H \rtimes \Ga$, where the action of $\Ga$ on $H$ is determined by $\ga h \ga\me = h^4$.
Let $\eta$ be the irreducible character of $H$ with $\eta(h) = \zeta_9$. Then $St(\eta) = H \times \langle \ga^3 \rangle$
and $\chi = \ind^G_{St(\eta)} \chi'$ with $\chi'(h) = \eta(h)$ and $\chi'(\ga^3) = 1$ is an irreducible character of $G$ with open kernel.
We have $\chi(1) = w_{\chi} = 3$ and $\qp(\eta) = \qp(\zeta_9)$, $\Q_{p,\chi} = \qp(\zeta_3)$.
We see that $s_{\chi} = [\qp(\zeta_9):\qp(\zeta_3)] = 3$.
Moreover, the different $\fD(\zp[\zeta_9]/\zp[\zeta_3])$ is easily computed and we find that $k_{\chi} = 3$.
Thus $r_{\chi} = - \lfloor \frac{k_{\chi}}{s_{\chi}} \rfloor = -1$ is non-trivial and (\ref{eqn:conductor-inclusion})
is a proper inclusion in this case.

\section{Consequences of the central conductor formula}

In this section we derive several consequences of our main Theorem \ref{thm:central-conductor}.


\subsection{Extensions of lattices}
\begin{cor} \label{cor:cond_ann_Ext}
Let $M$ be a $\La^{\fo}(G)$-lattice and let $N$ be a finitely generated $\La^{\fo}(G)$-module. Then
$$\left(\bigoplus_{\chi \in \irr(G)/ \sim_K} \frac{|H| w_{\chi}}{\chi(1)} \cdot \fD\me(\fo_{\chi} / \fo) d_{\chi} \right) \cdot \Ext^i_{\La^{\fo}(G)}(M,N)$$
is finite for all integers $i \geq 1$.
\end{cor}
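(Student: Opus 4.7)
My plan is to first invoke Theorem~\ref{theo:central-conductor}, which identifies the direct sum on the left with the central conductor $\mc F(\La^{\fo}(G))$. The corollary then reduces to the cleaner assertion
$$\mc F(\La^{\fo}(G)) \cdot \Ext^i_{\La^{\fo}(G)}(M,N) = 0 \qquad \text{for all } i \geq 1,$$
which is the natural analogue of Jacobinski's classical annihilation theorem \cite[Theorem 29.11]{CR-I}, whose proof I intend to transport to the present setting.

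Standard dimension shifting reduces the claim to the case $i=1$: if $\cdots \to P_1 \to P_0 \to M \to 0$ is a projective $\La^{\fo}(G)$-resolution of the lattice $M$, then each $P_j$ is $R$-free and $M$ is $R$-free, so every short exact sequence $0 \to \Om^{j+1}M \to P_j \to \Om^jM \to 0$ splits over $R$, whence every syzygy $\Om^jM$ is again a $\La^{\fo}(G)$-lattice. The identification $\Ext^i_{\La^{\fo}(G)}(M,N) \simeq \Ext^1_{\La^{\fo}(G)}(\Om^{i-1}M,N)$ for $i \geq 2$ then reduces everything to $i=1$. For the case $i=1$, fix $f \in \mc F(\La^{\fo}(G))$ and a class $\xi \in \Ext^1_{\La^{\fo}(G)}(M,N)$ represented by $0 \to N \to E \to M \to 0$. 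Since $f$ is central, the action of $f$ on $\xi$ is induced by the endomorphism $\cdot f$ on $M$, and the key observation is that this endomorphism factors as
$$M \xrightarrow{\; m \mapsto 1 \ot m \;} \ti\La^{\fo}(G) \ot_{\La^{\fo}(G)} M \xrightarrow{\; \tilde a \ot m \mapsto (f\tilde a)m \;} M,$$
with the second arrow well defined precisely because $f\ti\La^{\fo}(G) \subset \La^{\fo}(G)$. Granted the $\La^{\fo}(G)$-projectivity of $\ti\La^{\fo}(G) \ot_{\La^{\fo}(G)} M$, the corresponding factorization of $(\cdot f)^{\ast}$ on $\Ext^1$ runs through $\Ext^1_{\La^{\fo}(G)}\bigl(\ti\La^{\fo}(G) \ot_{\La^{\fo}(G)} M,\, N\bigr) = 0$, forcing $f \cdot \xi = 0$.

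The step I expect to pose the main obstacle—and the only genuine departure from the group ring argument—is verifying that $\ti\La^{\fo}(G) \ot_{\La^{\fo}(G)} M$ is indeed $\La^{\fo}(G)$-projective. Over a Dedekind base this is automatic from hereditariness of the maximal order, but here $R = \fo[[T]]$ is two-dimensional regular local and $\ti\La^{\fo}(G)$ has global dimension $2$, so an arbitrary lattice need not be projective. My plan is to check projectivity locally: after localizing at each height one prime $(\pi_\chi)$ and passing to completions, Corollary~\ref{cor:technical_result} places a maximal $\fo_\chi$-order inside the completed maximal order, reducing the question componentwise to the classical theory of hereditary orders over a complete discrete valuation ring with finite residue field. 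A faithfully flat descent then transports projectivity back to $\La^{\fo}(G)$, completing the argument.
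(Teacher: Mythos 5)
Your dimension\hyp shifting step agrees with the paper's: the paper passes to the syzygy $M'$ of a free cover $\La^{\fo}(G)^k \onto M$, notes that $M'$ is again a lattice because $M$ and $\La^{\fo}(G)$ are $R$-free, and uses the resulting isomorphisms $\Ext^{j}_{\La^{\fo}(G)}(M',N) \simeq \Ext^{j+1}_{\La^{\fo}(G)}(M,N)$ for the induction step; your formulation via syzygies of a projective resolution is the same argument. For the base case $i=1$, however, the paper simply invokes \cite[Theorem~29.4]{CR-I}, whereas you attempt to re-derive it, and this is where a genuine gap appears.

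The flaw is in your assertion that $\ti\La^{\fo}(G) \ot_{\La^{\fo}(G)} M$ is $\La^{\fo}(G)$-projective, and in particular in the claim that "over a Dedekind base this is automatic from hereditariness of the maximal order." Hereditariness of $\ti\La$ makes $\ti\La$-lattices $\ti\La$-projective; it does not make them $\La$-projective, and the latter fails already in the most classical situations. For instance, take $\La = \zp C_p$, $M = \La$, and $\ti\La = \zp \times \zp[\zeta_p]$ a maximal order containing it. Then $\ti\La \ot_\La M = \ti\La$ has $\zp$-rank $p$, equal to that of $\La$, but one computes
$$\ti\La\,\big/\,\rad(\La)\ti\La \;=\; \ti\La\,\big/\,\bigl(p\ti\La + (\sigma-1)\ti\La\bigr) \;\simeq\; \F_p \times \F_p,$$
so $\ti\La$ needs two generators over the local ring $\La$, cannot be $\La$-free, hence is not $\La$-projective, and $\Ext^1_\La(\ti\La, N)$ does not vanish for all $N$. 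Consequently your factorization of $(\cdot f)^{\ast}$ through $\Ext^1_{\La^{\fo}(G)}\bigl(\ti\La^{\fo}(G)\ot M, N\bigr)$ does not close, and the proposed repair via Corollary~\ref{cor:technical_result} is aimed at proving a projectivity statement that is false well before any two\hyp dimensional subtleties enter. The result cited by the paper, \cite[Theorem~29.4]{CR-I}, is formulated for orders over a general noetherian integrally closed domain (exactly the generality needed for $R = \fo[[T]]$) and its proof does not pass through $\La$-projectivity of $\ti\La \ot M$; it instead exploits the $R$-projectivity of the lattice $M$ together with the separability of $\mc Q^K(G)$, using a reduced\hyp trace/dual\hyp basis argument to turn an $R$-linear retraction of a syzygy sequence into an $\mc Q^K(G)$-linear one whose failure of integrality is controlled precisely by the conductor. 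So the obstacle you identify as "the only genuine departure from the group ring argument" is not an obstacle in the correct argument, while the one your route introduces is fatal even in the group ring case.
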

\begin{proof}
An $R$-module $M$ is finite if and only if it is pseudo-null, i.e.~$M_{\fp} = 0$ for every height one prime ideal $\fp$ of $R$.
We do induction on the integer $i$. As $R_{\fp}$ is a Dedekind domain for every height one prime ideal $\fp$, the case $i=1$ is now immediate from Theorem \ref{thm:central-conductor} and \cite[Theorem 29.4]{CR-I}. For $k$ sufficiently large, there is an exact sequence
$$M' \into \La^{\fo}(G)^k \onto M.$$
Then $M'$ also is a $\La^{\fo}(G)$-lattice.
Applying $\Hom_{\La^{\fo}(G)}(-, N)$ to the above exact sequence gives isomorphisms
$$\Ext^j_{\La^{\fo}(G)}(M',N) \simeq \Ext^{j+1}_{\La^{\fo}(G)}(M,N)$$
for all integers $j \geq 1$. The case $j = i-1$ gives the induction step.
\end{proof}

\example The stronger statement that the central conductor annihilates $\Ext^i_{\La^{\fo}(G)}(M,N)$ is not true.
Assume that $G = \Ga$ and $K = \qp$. Then Corollary \ref{cor:cond_ann_Ext} simply asserts that $\Ext^i_{\La(\Ga)}(M,N)$ is finite
for every finitely generated torsionfree $\La(\Ga)$-module $M$ and every finitely generated $\La(\Ga)$-module $N$. Suppose that
$\Ext^i_{\La(\Ga)}(M,N) = 0$ for every finitely generated $\La(\Ga)$-module $N$; then $M$ is projective and thus free as $\La(\Ga)$-module.
However, there are many examples of torsionfree $\La(\Ga)$-modules which are not free (take for example the maximal ideal of $\La(\Ga)$).\\

For $\chi \in \irr(G)$ we put $\fa_{\chi}\me := (\fD\me(\fo_{\chi} / \fo) \cap K) \cdot R \supseteq R$.
\begin{cor} \label{cor:cond_cap_R}
We have
$$ \bigcap_{\chi \in \irr(G) / \sim_K} \frac{|H| w_{\chi}}{\chi(1)} \cdot \fa_{\chi}\me \subseteq R \cap \mc F(\La^{\fo}(G)).$$
\end{cor}

In fact, if $r_{\chi} = 0$ for all $\chi \in \irr(G)$, then a proof similar to that of \cite[Theorem 27.13(ii)]{CR-I} shows that
the inclusion in Corollary \ref{cor:cond_cap_R} is an equality.

For every $\La^{\fo}(G)$-module $M$ let $\Upsilon(M) := \left\{ e_{\chi}\mid e_{\chi} \cdot \mc Q^K(G) \otimes_{\La^{\fo}(G)} M = 0 \right\}$.
\begin{cor}
Let $M$ and $N$ be $\La^{\fo}(G)$-lattices and let $i\geq 1$ be an integer. Then
$$\left(\bigcap_{e_{\chi} \not\in \Upsilon(M)} \frac{|H| w_{\chi}}{\chi(1)} \cdot \fa_{\chi}\me \right) \Ext^i_{\La^{\fo}(G)}(M,N)$$
is finite. In particular,
$\left(\bigcap_{\chi \in \irr(G)/ \sim_K} \frac{|H| w_{\chi}}{\chi(1)} \cdot \fa_{\chi}\me \right)\cdot \Ext^i_{\La^{\fo}(G)}(M,N)$
is finite for every $\La^{\fo}(G)$-lattices $M$ and $N$ and every integer $i \geq 1$.
\end{cor}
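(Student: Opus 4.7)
The plan is to derive this corollary from Corollary \ref{cor:cond_ann_Ext} (which says the full central conductor annihilates $\Ext$) by a decomposition argument using the Galois-invariant primitive central idempotents $\ve_\chi$ of $\zeta(\mc Q^K(G))$. The ``In particular'' clause follows immediately once the main assertion is established, since the intersection taken over all $\chi$ is contained in the intersection over the subset $\{\chi : e_\chi \notin \Upsilon(M)\}$.

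For the main assertion I would fix $z$ in the intersection $\bigcap_{e_\chi \notin \Upsilon(M)}(|H|w_\chi/\chi(1))\fd_\chi\me$ (acting on $\Ext^i$ through the embedding $R \hookrightarrow \zeta(\La^\fo(G))$) and split it inside $\zeta(\mc Q^K(G))$ as
$$z = z_1 + z_2, \qquad z_1 := \sum_{e_\chi \notin \Upsilon(M)} z\ve_\chi, \qquad z_2 := \sum_{e_\chi \in \Upsilon(M)} z\ve_\chi.$$
For each $\chi$ with $e_\chi \notin \Upsilon(M)$, the hypothesis $z \in (|H|w_\chi/\chi(1))\fd_\chi\me$ together with the containment $\fd_\chi\me \cdot \ve_\chi \subset \mc D\me(\fo_\chi/\fo)\La^{\fo_\chi}(\Ga_\chi)$ (obtained from $\fd_\chi\me = (\mc D\me(\fo_\chi/\fo)\cap K)\cdot R$ and the embedding $1+T \mapsto \ga_\chi^{p^n/w_\chi}$ of Remark \ref{rem:embedding}) shows that $z\ve_\chi$ lies in the $\chi$-summand of the central conductor described in Theorem \ref{theo:central-conductor}. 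Summing over these $\chi$ places $z_1$ inside $\mc F(\La^\fo(G))$, and Corollary \ref{cor:cond_ann_Ext} yields $z_1 \cdot \Ext^i_{\La^\fo(G)}(M,N) = 0$.

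The argument for $z_2$ is of a different flavour: I would show that $z_2$ already annihilates $M$ itself. Since both $z$ and $z_1$ lie in $\zeta(\La^\fo(G))$, so does $z_2 = z - z_1$. By construction $z_2$ is supported on idempotents $\ve_\chi$ with $e_\chi \in \Upsilon(M)$, and each such $\ve_\chi$ kills $\mc Q^K(G)\otimes_R M$ by definition of $\Upsilon(M)$; hence $z_2 \cdot (\mc Q^K(G)\otimes_R M) = 0$. Because $M$ is an $R$-lattice the natural map $M \hookrightarrow \mc Q^K(G) \otimes_R M$ is injective, forcing $z_2 \cdot M = 0$ already inside $M$, so \emph{a fortiori} $z_2$ annihilates $\Ext^i_{\La^\fo(G)}(M,N)$. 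Combining the two annihilation statements proves the corollary. The only non-formal step is the containment $\fd_\chi\me \cdot \ve_\chi \subset \mc D\me(\fo_\chi/\fo)\La^{\fo_\chi}(\Ga_\chi)$ used in the argument for $z_1$; beyond that piece of bookkeeping, everything reduces to formal manipulation with central idempotents.
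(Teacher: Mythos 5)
Your proof is correct and is essentially the same decomposition argument that the paper invokes by citing \cite[Theorem 29.9]{CR-I}: split $z$ along the Galois-invariant central idempotents $\ve_\chi$, place the part supported on $e_\chi\notin\Upsilon(M)$ inside $\mc F(\La^{\fo}(G))$ via Theorem \ref{theo:central-conductor} and the containment $\fd_\chi\me\ve_\chi\subset\mc D\me(\fo_\chi/\fo)\La^{\fo_\chi}(\Ga_\chi)$, and observe that the remaining part already kills the lattice $M$ (using $M\hookrightarrow\mc Q^K(G)\otimes M$) and hence acts as zero on $\Ext^i$. The only cosmetic difference is in the ``in particular'' step, which the paper deduces from Corollaries \ref{cor:cond_ann_Ext} and \ref{cor:cond_cap_R} directly, whereas you deduce it from the first assertion by shrinking the intersection; both are immediate.
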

\begin{proof}
The last assertion is an immediate consequence of Corollary \ref{cor:cond_ann_Ext} and Corollary \ref{cor:cond_cap_R}. The first assertion is also easy and is shown exactly
in the same way as \cite[Theorem 29.9]{CR-I}.
\end{proof}

\begin{cor}
Let $M$ and $N$ be $\La^{\fo}(G)$-lattices and assume that $\mc Q^K(G) \otimes_{\La^{\fo}(G)} M$ is absolutely simple.
Then there is a unique idempotent $e_{\chi} \not\in \Upsilon(M)$ and
for every integer $i\geq 1$ we have that
$\frac{|H| w_{\chi}}{\chi(1)} \cdot \Ext^i_{\La^{\fo}(G)}(M,N)$ is finite.
\end{cor}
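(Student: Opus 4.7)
The plan is to apply the first assertion of the preceding corollary and show that, under the absolute simplicity hypothesis, the intersection appearing there collapses to a single summand which trivially contains the scalar $\frac{|H|w_{\chi}}{\chi(1)}$.

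First I would verify that there is a unique primitive central idempotent $e_{\chi} \not\in \Upsilon(M)$. Set $V := \mc Q^K(G) \otimes M$. Since $V$ is a simple $\mc Q^K(G)$-module and $\mc Q^K(G) = \bigoplus_{\chi/\sim} A_{\chi}$ is a direct sum of simple algebras by Corollary \ref{cor:chi-equation}, $V$ is supported on a single summand $A_{\chi}$, so $\ve_{\chi'} V = 0$ for all $\chi' \not\sim \chi$. Absolute simplicity of $V$ further implies that $\qpc \otimes_K V$ remains simple as a $\mc Q^c(G)$-module, hence is concentrated on a single primitive central idempotent of $\mc Q^c(G)$. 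But the primitive central idempotents of $\mc Q^c(G)$ contained in $\ve_{\chi}$ are precisely the Galois conjugates $e_{\chi^{\si}}$ for $\si \in \Gal(K_{\chi}/K)$, so this Galois orbit must consist of a single element. In particular, $e_{\chi}$ is the only primitive central idempotent of $\mc Q^c(G)$ outside $\Upsilon(M)$.

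Then the preceding corollary tells us that $\bigcap_{e_{\chi'} \not\in \Upsilon(M)} \frac{|H|w_{\chi'}}{\chi'(1)} \fd_{\chi'}\me$ annihilates $\Ext^i_{\La^{\fo}(G)}(M,N)$; by the previous step this intersection reduces to the single ideal $\frac{|H|w_{\chi}}{\chi(1)} \fd_{\chi}\me$. To conclude it suffices to observe that this ideal contains the scalar $\frac{|H|w_{\chi}}{\chi(1)}$ itself. Indeed, $\mc D\me(\fo_{\chi}/\fo) \supseteq \fo_{\chi} \supseteq \fo$, so $\mc D\me(\fo_{\chi}/\fo) \cap K \supseteq \fo$ and therefore $\fd_{\chi}\me \supseteq R$; meanwhile the first corollary of this section gives $\frac{|H|w_{\chi}}{\chi(1)} \in \zp \subseteq R$, whence the scalar lies in $\frac{|H|w_{\chi}}{\chi(1)} \fd_{\chi}\me$.

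The only nontrivial step is the first one, namely translating absolute simplicity into the statement that the relevant Galois orbit is a singleton; the rest is bookkeeping.
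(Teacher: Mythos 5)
The paper offers no explicit proof here, but your route — reduce to the preceding corollary and check that the intersection collapses to a single summand containing the scalar — is clearly the intended one, and your bookkeeping at the end (that $\fd_\chi^{-1} \supseteq R$, hence $\frac{|H|w_\chi}{\chi(1)} \in \frac{|H|w_\chi}{\chi(1)}\fd_\chi^{-1}$) is exactly right.

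The one place where your argument is compressed to the point of leaving a small gap is the deduction that the Galois orbit $\{e_{\chi^\sigma}\}_{\sigma \in \Gal(K_\chi/K)}$ is a singleton. You observe that $\qpc \otimes_K V$ is simple and therefore concentrated on a single primitive central idempotent, and that the candidates are the $e_{\chi^\sigma}$, but you do not say why this forces the orbit to be trivial rather than $\qpc \otimes_K V$ simply landing on one conjugate and vanishing on the others. The missing observation is that $V$ is a \emph{faithful} $A_\chi$-module (every nonzero module over a simple ring is faithful), so by flat base change $\qpc \otimes_K V$ is a faithful $\qpc \otimes_K A_\chi$-module and therefore has nonzero component at \emph{every} $e_{\chi^\sigma}$; simplicity then forces $[K_\chi:K]=1$. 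Equivalently, one can argue directly that absolute simplicity gives $\End_{\mc Q^K(G)}(V) = L$, and since this endomorphism ring contains the center $\mc Q^{K_\chi}(\Ga_\chi) \supseteq L$ of $A_\chi$, one gets $K_\chi = K$. Either patch closes the argument; as written, the step ``so this Galois orbit must consist of a single element'' does not follow from what precedes it.
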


\example Assume that $p$ divides $|H|$ and that $K=\qp$. We consider $\La(\Ga)$ as $\La(G)$-module in the natural way;
then $\mc Q(G) \otimes_{\La(G)} \La(\Ga)$ is absolutely simple
and $e_{\chi_0}$ is the unique idempotent not contained in $\Upsilon(\La(\Ga))$, where $\chi_0$ denotes the trivial character.
Moreover, we have $\chi_0(1) = 1$ and $w_{\chi_0} = 1$.
Let $\De(G,\Ga)$ be the kernel of the augmentation map $\aug: \La(G) \onto \La(\Ga)$ which sends each $h \in H$ to $1$.
We then have an exact sequence
$$\Hom_{\La(G)}(\La(\Ga), \De(G,\Ga)) \into \Hom_{\La(G)}(\La(\Ga), \La(G)) \stackrel{\aug_{\ast}}{\lto} \Hom_{\La(G)}(\La(\Ga), \La(\Ga))$$
    $$\onto \Ext^1_{\La(G)}(\La(\Ga), \De(G,\Ga)).$$
We have a canonical isomorphism $\Hom_{\La(G)}(\La(\Ga), \La(\Ga)) \simeq \La(\Ga)$, $f \mapsto f(1)$ and the image of $\aug_{\ast}$ in $\La(\Ga)$
under this identification is given by
$\aug((\sum_{h\in H} h) \La(G)) = |H| \La(\Ga)$. We therefore have a canonical isomorphism
$$\Ext^1_{\La(G)}(\La(\Ga), \De(G,\Ga)) \simeq \La(\Ga) / |H| \La(\Ga);$$
in particular, this module is not finite. However, $|H| \Ext^1_{\La(G)}(\La(\Ga), \De(G,\Ga)) = 0$ is finite.

%
%

\subsection{Non-commutative Fitting invariants}
For the following we refer the reader to \cite{ich-Fitting}.
Let $A$ be a separable $L$-algebra and $\La$ be an $R$-order in $A$, finitely generated as $R$-module,
where $R$ is an integrally closed complete commutative noetherian local domain with field of quotients $L$.
Let $N$ and $M$ be two $\zeta(\La)$-submodules of
    an $R$-torsionfree $\zeta(\La)$-module.
    Then $N$ and $M$ are called {\it $\nr(\La)$-equivalent} if
    there exists an integer $n$ and a matrix $U \in \Gl_n(\La)$
    such that $N = \nr(U) \cdot M$, where $\nr: A \to \zeta(A)$ denotes
    the reduced norm map which extends to matrix rings over $A$ in the obvious way.
    We denote the corresponding equivalence class by $[N]_{\nr(\La)}$.
    We say that $N$ is
    $\nr(\La)$-contained in $M$ (and write $[N]_{\nr(\La)} \subseteq [M]_{\nr(\La)}$)
    if for all $N' \in [N]_{\nr(\La)}$ there exists $M' \in [M]_{\nr(\La)}$
    such that $N' \subseteq M'$. 
    We will say that $x$ is contained in $[N]_{\nr(\La)}$ (and write $x \in [N]_{\nr(\La)}$) if there is $N_0 \in [N]_{\nr(\La)}$ such that $x \in N_0$.
    Now let $M$ be a finitely presented (left) $\La$-module and let
    \beq \label{finite_representation}
        \La^a \stackrel{h}{\lto} \La^b \onto M
    \eeq
    be a finite presentation of $M$.
    We identify the homomorphism $h$ with the corresponding matrix in $M_{a \times b}(\La)$ and define
    $S(h) = S_b(h)$ to be the set of all $b \times b$ submatrices of $h$ if $a \geq b$.
    The Fitting invariant of $h$ over $\La$ is defined to be
    $$\Fitt_{\La}(h) = \left\{ \barr{lll} [0]_{\nr(\La)} & \mbox{ if } & a<b \\
                        \left[\langle \nr(H) \mid H \in S(h)\rangle_{\zeta(\La)}\right]_{\nr(\La)} & \mbox{ if } & a \geq b. \earr \right.$$
    We call $\Fitt_{\La}(h)$ a Fitting invariant of $M$ over $\La$. One defines $\Fitt_{\La}^{\max}(M)$ to be the unique
    Fitting invariant of $M$ over $\La$ which is maximal among all Fitting invariants of $M$ with respect to the partial
    order ``$\subseteq$''.

    We now specialize to the situation in this article, where $\La$ is $\La^{\fo}(G)$.
    Then Theorem \ref{thm:central-conductor} and \cite[Corollary 6.2]{JN_Fitting} imply the following result.
    \begin{cor}
    Let $M$ be a finitely presented $\La^{\fo}(G)$-module. Then
    $$\left(\bigoplus_{\chi \in \irr(G)/ \sim_K} \frac{|H| w_{\chi}}{\chi(1)} \cdot \fD\me(\fo_{\chi} / \fo) d_{\chi}\right) \cdot \Fitt_{\La^{\fo}(G)}^{\max}(M)
    \subseteq\Ann_{\La^{\fo}(G)}(M).$$
    \end{cor}

    Together with  Corollary \ref{cor:cond_cap_R} and \cite[Lemma 3.4]{ich-Fitting} this yields the following corollary.
    \begin{cor}
    Let $M$ be a finitely presented $\La^{\fo}(G)$-module. Then
    $$\left(\bigcap_{e_{\chi} \not\in \Upsilon(M)} \frac{|H| w_{\chi}}{\chi(1)} \cdot \fa_{\chi}\me \right) \cdot \Fitt_{\La^{\fo}(G)}^{\max}(M)
    \subseteq\Ann_{\La^{\fo}(G)}(M).$$
    \end{cor}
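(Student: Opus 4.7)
The plan is to combine the preceding corollary with Corollary \ref{cor:cond_cap_R} and \cite[Lemma 3.4]{ich-Fitting} in a fairly formal manner. The preceding corollary already gives the inclusion
$$\mc F(\La^{\fo}(G)) \cdot \Fitt^{\max}_{\La^{\fo}(G)}(M) \subset \Ann_{\La^{\fo}(G)}(M),$$
where $\mc F(\La^{\fo}(G))$ decomposes as a direct sum indexed by $\chi / \sim$. Because the right-hand side is a left ideal of $\La^{\fo}(G)$, intersecting the central conductor with $R$ (viewed inside $\zeta(\La^{\fo}(G))$) preserves the inclusion. Invoking Corollary \ref{cor:cond_cap_R} would then yield the coarser version
$$\left(\bigcap_{\chi / \sim} \frac{|H| w_{\chi}}{\chi(1)} \cdot \fd_{\chi}\me\right) \cdot \Fitt^{\max}_{\La^{\fo}(G)}(M) \subset \Ann_{\La^{\fo}(G)}(M),$$
where the intersection runs over \emph{all} characters. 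This is the statement I would prove first; it follows automatically once the intersection-with-$R$ trick is applied.

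To sharpen to the intersection over only those indices with $e_{\chi} \not\in \Upsilon(M)$, the role of \cite[Lemma 3.4]{ich-Fitting} is to exhibit the idempotents $e_{\chi}$ with $e_{\chi} \in \Upsilon(M)$ as redundant for annihilation purposes. On those simple components of $\mc Q^K(G)$ the module $\mc Q^K(G) \otimes M$ vanishes by definition of $\Upsilon(M)$, and the cited lemma ensures that the maximal Fitting invariant $\Fitt^{\max}_{\La^{\fo}(G)}(M)$ already behaves, with respect to the Wedderburn decomposition, as if it contained the corresponding central idempotents. Consequently, one may truncate the conductor factor to the components indexed by $e_{\chi} \not\in \Upsilon(M)$ without losing any annihilation information on $M$.

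The main obstacle, I expect, is checking that the truncation-to-support step is compatible with the intersection-with-$R$ operation coming from Corollary \ref{cor:cond_cap_R}. Concretely, one needs the analogous identity
$$R \cap \bigoplus_{e_{\chi} \not\in \Upsilon(M)} \frac{|H| w_{\chi}}{\chi(1)} \cdot \mc D\me(\fo_{\chi}/\fo) \La^{\fo_{\chi}}(\Ga_{\chi}) = \bigcap_{e_{\chi} \not\in \Upsilon(M)} \frac{|H| w_{\chi}}{\chi(1)} \cdot \fd_{\chi}\me,$$
which should be a direct adaptation of the argument used for Corollary \ref{cor:cond_cap_R}, itself modeled on \cite[Theorem 27.13(ii)]{CR-I}. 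Once this partial-intersection identity is verified and \cite[Lemma 3.4]{ich-Fitting} is used to discard the redundant indices, the conclusion is a short formal consequence of the previous corollary.
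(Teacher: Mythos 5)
Your proposal is correct and follows the paper's route exactly: the paper's proof consists precisely of the one-line citation of the preceding corollary together with Corollary \ref{cor:cond_cap_R} and \cite[Lemma 3.4]{ich-Fitting}. Your elaboration of why the truncation to $e_{\chi} \not\in \Upsilon(M)$ is legitimate, and of the partial intersection-with-$R$ identity (which indeed reduces to the argument of Corollary \ref{cor:cond_cap_R} applied componentwise, with the $\Upsilon(M)$-components imposing no extra constraint since $R \subset \La^{\fo_{\chi}}(\Ga_{\chi})$ automatically), just fills in the details the paper leaves implicit.
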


    \begin{rem}
        Note that in fact $\mc H(\La^{\fo}(G)) \cdot \Fitt_{\La^{\fo}(G)}^{\max}(M) \subseteq\Ann_{\La^{\fo}(G)}(M)$, where $\mc H(\La^{\fo}(G))$
        is a certain ideal of $\zeta(\La^{\fo}(G))$ which always contains the central conductor. In general, however, this containment
        is not an equality. Though the ideal $\mc H(\La^{\fo}(G))$ is hard to determine in general, considerable progress is made in \cite{JN_Fitting};
        in particular, by \cite[Proposition 4.5]{JN_Fitting} one knows that $\mc H(\La^{\fo}(G))$ equals $\zeta(\La^{\fo}(G))$
        (to wit: is best possible) if and only if $p$
        does not divide the order of the commutator subgroup of $G$ (which is finite).
    \end{rem}

\noindent Andreas Nickel~~ anickel3@math.uni-bielefeld.de\\
Universit\"{a}t Bielefeld,
    Fakult\"{a}t f\"{u}r Mathematik,
    Postfach 100131,
    33501 Bielefeld,
    Germany

\end{document}